\theoremstyle{thmstyleone}%
\newtheorem{theorem}{Theorem}[section]% meant for sectionwise numbers
\newtheorem{proposition}[theorem]{Proposition}% 
\newtheorem{assum}{Assumption}
\theoremstyle{thmstyletwo}%
\theoremstyle{thmstylethree}%
\newtheorem{definition}[theorem]{Definition}%
\newcommand{\opt}{\textsc{opt}}
\newcommand{\XE}{X_{\textnormal{E}}}
\newcommand{\YN}{Y_{\textnormal{N}}}
\newcommand{\YWN}{Y_{\textnormal{WN}}}
\newcommand{\XWE}{X_{\textnormal{WE}}}
\newcommand{\QQ}{\mathbb{Q}}
\newcommand{\NN}{\mathbb{N}}
\newcommand{\gap}{\textsc{Gap}}
\newcommand{\constrained}{\textsc{Constrained}}
\newcommand{\dualrestrict}{\textsc{DualRestrict}}
\newcommand{\edom}[1]{\preceq_{\varepsilon}^{#1}}
\begin{document}

\title[Multiobjective Approximation: How exact can you be?]{Approximating Multiobjective Optimization Problems: How exact can you be?}
%\thanks{This work was supported by the bilateral cooperation project ``Approximation methods for multiobjective optimization problems'' funded by the German Academic Exchange Service (DAAD, Project-ID 57388848) and by Campus France, PHC PROCOPE 2018 (Project no. 40407WF) as well as by the DFG grants TH 1852/4-1 and RU 1524/6-1.}

%%=============================================================%%
%% Prefix	-> \pfx{Dr}
%% GivenName	-> \fnm{Joergen W.}
%% Particle	-> \spfx{van der} -> surname prefix
%% FamilyName	-> \sur{Ploeg}
%% Suffix	-> \sfx{IV}
%% NatureName	-> \tanm{Poet Laureate} -> Title after name
%% Degrees	-> \dgr{MSc, PhD}
%% \author*[1,2]{\pfx{Dr} \fnm{Joergen W.} \spfx{van der} \sur{Ploeg} \sfx{IV} \tanm{Poet Laureate} 
%%                 \dgr{MSc, PhD}}\email{iauthor@gmail.com}
%%=============================================================%%

\author[1]{\fnm{Cristina} \sur{Bazgan}}\email{bazgan@lamsade.dauphine.fr}

\author[2,3]{\fnm{Arne} \sur{Herzel}}\email{arne.herzel@hswt.de}

\author[2]{\fnm{Stefan} \sur{Ruzika}}\email{ruzika@mathematik.uni-kl.de}

\author*[3,4]{\fnm{Clemens} \sur{Thielen}}\email{clemens.thielen@tum.de}

\author[1]{\fnm{Daniel} \sur{Vanderpooten}}\email{daniel.vanderpooten@lamsade.dauphine.fr}

\affil[1]{\orgname{Universit\'e Paris-Dauphine, PSL Reseach University, CNRS}, \orgdiv{LAMSADE}, \postcode{75016} \city{Paris}, \country{France}}

\affil[2]{\orgdiv{Department of Mathematics}, \orgname{University of Kaiserslautern-Landau}, \orgaddress{\street{Paul-Ehrlich-Str.~14}, \postcode{67663} \city{Kaiserslautern}, \country{Germany}}}

\affil[3]{\orgdiv{TUM Campus Straubing for Biotechnology and Sustainability}, \orgname{Weihenstephan-Triesdorf University of Applied Sciences}, \orgaddress{\street{Am~Essigberg~3}, \postcode{94315} \city{Straubing}, \country{Germany}}}

\affil[4]{\orgdiv{Department of Mathematics}, \orgname{Technical University of Munich}, \orgaddress{\street{Boltzmannstr.~3}, \postcode{85748} \city{Garching}, \country{Germany}}}

%%==================================%%

\abstract{It is well known that, under very weak assumptions, multiobjective optimization problems admit $(1+\varepsilon,\dots,1+\varepsilon)$-approximation sets (also called \emph{$\varepsilon$-Pareto sets}) of polynomial cardinality (in the size of the instance and in~$\frac{1}{\varepsilon}$). While an approximation guarantee of $1+\varepsilon$ for any $\varepsilon>0$ is the best one can expect for singleobjective problems (apart from solving the problem to optimality), even better approximation guarantees than $(1+\varepsilon,\dots,1+\varepsilon)$ can be considered in the multiobjective case since the approximation might be exact in some of the objectives. %(and $1+\varepsilon$-approximate in the others).

Hence, in this paper, we consider \emph{partially exact approximation sets} that require to approximate each feasible solution exactly, i.e., with an approximation guarantee of~$1$, in some of the objectives while still obtaining a guarantee of $1+\varepsilon$ in all others. We characterize the types of polynomial-cardinality, partially exact approximation sets that are guaranteed to exist for general multiobjective optimization problems. Moreover, we study minimum-cardinality partially exact approximation sets concerning (weak) efficiency of the contained solutions and relate their cardinalities to the minimum cardinality of a $(1+\varepsilon,\dots,1+\varepsilon)$-approximation set.}
% Hence, in this paper, we consider partially exact notions of approximation that require to approximate each feasible solution exactly, i.e., with an approximation guarantee of~$1$, in some of the objectives while still obtaining a guarantee of $1+\varepsilon$ in all others. We provide tight conditions on partially exact notions of approximation under which polynomial-cardinality approximation sets can be guaranteed to exist in general. Moreover, we study minimum-cardinality approximation sets for different partially exact notions of approximation concerning (weak) efficiency of the contained solutions and relate their cardinalities to the minimum cardinality of a $(1+\varepsilon,\dots,1+\varepsilon)$-approximation set.}

\keywords{Multiobjective optimization, Approximation, Efficient set, Intractability}

%%\pacs[JEL Classification]{D8, H51}

%%\pacs[MSC Classification]{35A01, 65L10, 65L12, 65L20, 65L70}

\maketitle
\section{Introduction}

Many real-world optimization problems require taking into account multiple conflicting objective functions. Since solutions that optimize all objectives simultaneously do usually not exist, the solutions of interest are the so-called \emph{efficient} (or \emph{Pareto optimal}) solutions. A solution is called efficient if any other solution that is better in some objective is worse in at least one other objective. The images of the efficient solutions in the objective space are called \emph{nondominated points}. A main goal in multiobjective optimization is to determine the set of all nondominated points (the \emph{nondominated set}) and, for each of them, one corresponding efficient solution. One major difficulty, however, is \emph{intractability}, that is, the fact that the nondominated set (and the efficient set) may be exponentially large for discrete problems (see, e.g.,~\cite{Ehrgott:hard-to-say}), and typically infinite for continuous problems.

This provides a strong motivation to consider \emph{approximations} of multiobjective optimization problems. Approximations for general multiobjective problems have been investigated since the 1990s~\cite{Safer:PHD,Safer+Orlin:combinatorial}. Their existence and cardinality for general multiobjective problems have been systematically investigated in the seminal work of Papadimitriou and Yannakakis~\cite{Papadimitriou+Yannakakis:multicrit-approx}. They show that, for any instance of a multiobjective optimization problem with a constant number of positive-valued, polynomial-time computable objective functions and any $\varepsilon>0$, there exists a $(1+\varepsilon,\dots,1+\varepsilon)$-approximation set (also called a $(1+\varepsilon)$-approximation set or an \emph{$\varepsilon$-Pareto set}) whose cardinality is \emph{fully} polynomial, i.e., polynomial in the encoding length of the instance and~$\frac{1}{\varepsilon}$. Additionally, they prove that a $(1+\varepsilon)$-approximation set can be computed in (fully) polynomial time for every~$\varepsilon>0$ if and only if a certain auxiliary problem~$\gap_\delta$ (the \emph{gap problem}) can be solved in (fully) polynomial time for every~$\delta>0$.

It has been observed, however, that $(1+\varepsilon)$-approximation sets are far from unique and different $(1+\varepsilon)$-approximation sets can differ significantly in their cardinality. 
%While the existence of polynomial-cardinality $\varepsilon$-Pareto sets is guaranteed by the results of Papadimitriou and Yannakakis~\cite{Papadimitriou+Yannakakis:multicrit-approx}, different $\varepsilon$-Pareto sets for a given instance might still differ significantly in their cardinality.
Consequently, further work has focused on the computation of $(1+\varepsilon)$-approximation sets whose cardinality can be bounded relative to the cardinality of a smallest (minimum-cardinality) $(1+\varepsilon)$-approximation set for the given instance~\cite{Bazgan+etal:min-pareto,Diakonikolas+Yannakakis:approx-pareto-sets,Koltun+Papadimitriou:approx-dom-repr,Vassilvitskii+Yannakakis:trade-off-curves}.

\smallskip

While an approximation guarantee of $1+\varepsilon$ for any $\varepsilon>0$ is the best solution quality one can expect for singleobjective problems (apart from solving the problem to optimality), even better approximation guarantees than $(1+\varepsilon,\dots,1+\varepsilon)$ can be considered in the multiobjective case since the approximation might be \emph{partially exact}, i.e., exact in some of the objectives (and $(1+\varepsilon)$-approximate in the others).
In fact, it has recently been shown in~\cite{HBRTV21} that, under similar assumptions as in~\cite{Papadimitriou+Yannakakis:multicrit-approx}, there exist polynomial-cardinality \emph{one-exact $(1+\varepsilon)$-approximation sets} that achieve an approximation guarantee of~$1$ in one of the objective functions (without loss of generality the first one) and $1+\varepsilon$ in the others. 

\smallskip

% In this paper, we systematically investigate which improvements over the approximation guarantee of $(1+\varepsilon,\dots,1+\varepsilon)$ obtained by $(1+\varepsilon)$-approximation sets are possible while still obtaining polynomial-cardinality approximation sets.
In this paper, we systematically investigate which types of partially exact approximation sets of polynomial cardinality are guaranteed to exist for general multiobjective problems. In particular, we show that, for any constant number~$p$ of objective functions and any $k\leq\lceil\frac{p}{2}\rceil$, polynomial-cardinality $(1+\varepsilon)$-approximation sets exist that approximate each feasible solution exactly in~$k$ of the objectives if we allow the objectives in which the approximation is exact to differ between different feasible solutions. We call these \emph{quasi-$k$-exact $(1+\varepsilon)$-approximation sets}. Their existence contrasts the fact that the efficient and nondominated sets are already intractable for many biobjective problems, which implies that polynomial-cardinality $(1+\varepsilon)$-approximation sets that are exact in the same objectives for all solutions can in general be exact in at most one objective~\cite{HBRTV21}. We obtain our general result by providing a general existence proof that generalizes the existence proofs for $(1+\varepsilon)$-approximation sets and one-exact $(1+\varepsilon)$-approximation sets provided in~\cite{Papadimitriou+Yannakakis:multicrit-approx} and~\cite{HBRTV21}, respectively.

Moreover, we investigate the question of (weak) efficiency of the solutions contained in minimum-cardinality partially exact $(1+\varepsilon)$-approximation sets of different types and relate their cardinalities 
%Moreover, we investigate relations between the minimum cardinalities obtainable for the different kinds of partially exact $(1+\varepsilon)$-approximation sets by relating them
to the minimum cardinality of a $(1+\varepsilon)$-approximation set for a given instance. In particular, we show that, for every instance, the minimum cardinality of a quasi-$1$-exact $(1+\varepsilon)$-approximation set equals the minimum cardinality of an ordinary $(1+\varepsilon)$-approximation set\footnote{We sometimes use the term \emph{ordinary} $(1+\varepsilon)$-approximation set to refer to $(1+\varepsilon)$-approximation sets if we want to explicitly distinguish them from the more demanding partially exact $(1+\varepsilon)$-approximation sets.}. This contrasts the result from~\cite{HBRTV21} stating that the minimum cardinality of a one-exact $(1+\varepsilon)$-approximation set can be an arbitrary factor larger than the minimum cardinality of an ordinary $(1+\varepsilon)$-approximation set. By generalizing the corresponding proof from~\cite{HBRTV21}, however, we show that, for $k\geq 2$, the cardinality of a quasi-$k$-exact $(1+\varepsilon)$-approximation set can again be an arbitrary factor larger than the minimum cardinality of an ordinary $(1+\varepsilon)$-approximation set.

Finally, we present several results concerning the polynomial-time computability of partially exact approximation sets. In particular, we show that the (fully) polynomial-time solvability of the gap problem does not suffice for the computation of any type of partially exact approximation set in general.

\smallskip

While we focus on results that are applicable to general multiobjective optimization problems under weak assumptions, there also exists a large body of work on approximations for specific multiobjective problems. For an up-to-date survey on both general and problem-specific approximation results, we refer to~\cite{Herzel+etal:survey}.

\section{Preliminaries}

Multiobjective optimization problems can contain objective functions that are to be minimized or maximized (or even a combination of both). However, we only consider the case of minimization problems in this article, i.e., all objectives are to be minimized. This is without loss of generality here since all our arguments can be straightforwardly adapted to the case where some or all objective functions are to be maximized.

\begin{definition}[Multiobjective Optimization Problem]
	A \emph{multiobjective optimization problem}~$\Pi$ is given by a set of instances. Each instance~$I$ consists of a (finite or infinite) set~$X^I$ of feasible solutions and a vector~$f^I = (f^I_1,\ldots, f^I_{p})$ of $p$~objective functions~$f^I_i : X^I \to \QQ$ for $i = 1,\ldots,p$ to be minimized. The feasible set~$X^I$ might not be given explicitly. 
\end{definition}

Here, the number~$p$ of objective functions in a multiobjective optimization problem~$\Pi$ is assumed to be constant. Moreover, as is common in the context of approximation of multiobjective optimization problems, we assume that the objective functions take only positive, rational values and are polynomial-time computable (cf.~\cite{Papadimitriou+Yannakakis:multicrit-approx,Diakonikolas+Yannakakis:approx-pareto-sets,Vassilvitskii+Yannakakis:trade-off-curves,HBRTV21}). Additionally, we use the following standard assumption\footnote{Given the previously-stated assumptions on the objective functions, this general assumption also used in~\cite{Diakonikolas+Yannakakis:approx-pareto-sets,Vassilvitskii+Yannakakis:trade-off-curves,HBRTV21} holds for large classes of problems - in particular, for combinatorial problems. Together with the previously-stated assumptions, it guarantees the existence of all versions of approximation sets considered here and in~\cite{Papadimitriou+Yannakakis:multicrit-approx,Diakonikolas+Yannakakis:approx-pareto-sets,Vassilvitskii+Yannakakis:trade-off-curves,HBRTV21}. We note, however, that specific types of approximation sets can also exist for some classes of problems for which this assumption does not formally hold (e.g., for multiobjective linear programming problems, see~\cite{Papadimitriou+Yannakakis:multicrit-approx}).}:
%As a consequence, we assume, similarly to~\cite{Papadimitriou+Yannakakis:multicrit-approx}, that there exists a polynomially bounded constant~$M$ such that, for any instance and any corresponding feasible set~$X$, we have $2^{-M} \leq f_j(x) \leq 2^M$ for all $x\in X$ and $j=1,\ldots,p$.

\begin{assum}\label{assum:M}
    For any multiobjective optimization problem~$\Pi$, there exists a polynomial~$\mathcal{P}$ such that for any instance~$I$ of~$\Pi$, there exists a constant~$M^I \leq \mathcal{P}(\textnormal{enc}(I))$ such that, for any $x \in X^I$ and any $i \in \{1,\ldots, p\}$, we have 
  $\mbox{enc}(f^I_i(x)) \leq M^I$, where $\mbox{enc}(I)$ denotes the encoding 
   length of the instance~$I$ and $\mbox{enc}(f^I_i(x))$ 
    denotes the encoding length of the value~$f^I_i(x) \in \QQ_{>0}$ in binary.
  This, in particular, implies that, for any instance~$I$ and any objective function value~$f_i^I(x)$, we have $2^{-M^I} \leq f_i^I(x) \leq 2^{M^I}$. Also, any two values~$f_i^I(x)$ and $f_i^I(x')$ differ by at least~$2^{-2M^I}$ if they are not equal. 
\end{assum}
	
%->justification for this assumption?: otherwise values could not be computed/put out in polyn time. Holds for polyn. computable discrete functions, linear optimization over Q,...
	
In the following, we sometimes blur the distinction between the problem~$\Pi$ and a concrete instance $I = (X^I,f^I)$ and we drop the superscript~$I$ indicating the dependence on the instance in~$X^I$, $f^I$, $M^I$, etc. whenever the instance is clear from the context.

% We state the following definitions for minimization problems. The definitions for maximization problems are analogous and can easily be deduced.

\smallskip

For multiobjective optimization problems, the solutions of interest are the so-called \emph{efficient solutions} for which any other solution that is better in some objective is worse in at least one other objective:

\begin{definition}\label{def:dominance}
	For an instance~$I$ of a multiobjective optimization problem, a solution~$x \in X$ \emph{dominates} another solution~$x' \in X$ (denoted as $x\preceq x'$)
    if $f_i(x) \leq f_i(x')$ for $i = 1,\ldots, p$ and $f_i(x) < f_i(x')$ for at least one~$i$. Moreover, a solution~$x \in X$ \emph{strictly dominates} another solution~$x' \in X$ if $f_i(x) < f_i(x')$ for $i = 1,\ldots, p$. A solution~$x \in X$ is called \emph{(weakly) efficient} if it is not (strictly) dominated by any other solution~$x' \in X$. In this case, we call the corresponding image~$f(x) \in f(X) \subseteq \QQ^p$ a \emph{(weakly) nondominated point}. The set~$\XE\subseteq X$ of all efficient solutions is called the \emph{efficient set} (or \emph{Pareto set}) and the set~$\YN\colonequals f(\XE)$ of nondominated points is called the \emph{nondominated set}. Similarly, the set~$\XWE\subseteq X$ of all weakly efficient solutions is called the \emph{weakly efficient set} and the set~$\YWN\colonequals f(\XWE)$ of weakly nondominated points is called the \emph{weakly nondominated set}.
\end{definition}

% \begin{definition}
% For a multiobjective problem~$\Pi$, a point $y=f(x)\in Y$ is \emph{dominated} by another point $y'=f(x')\in Y$ if $y'_j=f_j(x') \leq f_j(x)=y_j \text{ for all } j\in\{1,\dots,p\}$.
% Moreover, a  point $y=f(x)\in Y$ is \emph{strictly dominated} by a point $y'=f(x')\in Y$ if $y'_j=f_j(x') < f_j(x)=y_j \text{ for all } j\in\{1,\dots,p\}$.
% If the point $y=f(x)$ is not dominated by any other feasible point~$y'$, we call~$y$ \emph{nondominated} and the feasible solution $x\in X$ \emph{efficient}. If the point $y=f(x)$ is not strictly dominated by any other feasible point~$y'$, we call~$y$ \emph{weakly nondominated} and the feasible solution $x\in X$ \emph{weakly  efficient}. 
% The set~$\YN$ of nondominated points is called the \emph{nondominated set} and the set~$\XE$ of efficient solutions is called the \emph{efficient set} or \emph{Pareto set}. The set~$\YWN$ of weakly nondominated points is called the \emph{weakly nondominated set} and the set~$\XWE$ of efficient solutions is called the \emph{weakly efficient set}.
% \end{definition}
 
One way to circumvent the problem that the nondominated set often has cardinality exponential in the input size (see, e.g.,~\cite{Ehrgott:hard-to-say}) is the concept of approximation. Here, instead of requiring to select at least one corresponding efficient solution for each nondominated point, the requirement is relaxed so that each image point is only required to be dominated ``approximately'' by some image of a selected solution. This idea is formalized as follows:

\begin{definition}\label{def:approximation}
Let~$(X,f)$ be an instance of a multiobjective optimization problem and let $\alpha=(\alpha_1,\dots,\alpha_p)\in\mathbb{R}^p$ with $\alpha_i\geq 1$ for all $i\in\{1,\dots,p\}$.
We say that a feasible solution $x\in X$ \emph{$\alpha$-approximates} another feasible solution $x'\in X$ if $f_i(x) \leq \alpha_i\cdot f_i(x')$ for $i\in\{1,\dots,p\}$.
\end{definition}

% When fixing a desired vector~$\alpha$ of approximation guarantees, approximation as in Definition~\ref{def:approximation} can alternatively be expressed as a relation $R \subseteq X\times X$, where $(x,x')\in R$ (also denoted as $x R x'$) if and only if~$x'$ is $\alpha$-approximated by~$x$. 

When fixing a desired vector~$\alpha$ of approximation guarantees (or a set of possible desired vectors), approximation as in Definition~\ref{def:approximation} can equivalently be expressed as a relation~$R \subseteq X\times X$, where $(x,x')\in R$ (also denoted as $x R x'$) if and only if~$x'$ is $\alpha$-approximated by~$x$ for some desired vector~$\alpha$. In the following, we consider general approximate dominance relations that are \emph{monotonic} in the following sense: %\textcolor{red}{So ein bisschen kontraintuitiv ist dies schon, denn bei der Relation spielt natürlich die Zielfunktion eine Rolle, was aus dieser Notation eigentlich nicht bzw. nur implizit hervorgeht. Aber wir lassen's so.}

\begin{definition}\label{def:dominance-compatibility}
Let~$(X,f)$ be an instance of a multiobjective optimization problem. A relation $R \subseteq X\times X$ is called \emph{monotonic} if $f_i(x)\leq f_i(x')$ for all $i\in\{1\ldots,p\}$ implies that $x R x'$.
\end{definition}

Note that any monotonic relation~$R$ is, in particular, reflexive, i.e., $x R x$ for all $x\in X$. We are particularly interested in the following monotonic approximate dominance relations, which give rise to ordinary $(1+\varepsilon)$-approximation sets as introduced in~\cite{Papadimitriou+Yannakakis:multicrit-approx} and the different types of partially exact approximation sets considered here:

\begin{definition}\label{def:relations}
Let~$(X,f)$ be an instance of a multiobjective optimization problem and let $\varepsilon>0$. We define the following relations on~$X\times X$:
\begin{itemize}
	\item \emph{$(1+\varepsilon)$-dominance}: $x \edom{} x' :\Leftrightarrow$ $x$ $\alpha$-approximates~$x'$, where $\alpha_i=1+\varepsilon$ for~$i=1,\ldots,p$,
	% \item \emph{one-exact $(1+\varepsilon)$-dominance}: $x \edom{j} x' :\Leftrightarrow$ $x$ $\alpha$-approximates~$x'$, where $\alpha_j=1$ and $\alpha_i=1+\varepsilon$ for all $i\neq j$,
	% \item \emph{two-exact $(1+\varepsilon)$-dominance}: $x \edom{j,\ell} x' :\Leftrightarrow$ $x$ $\alpha$-approximates~$x'$, where $\alpha_j=\alpha_{\ell}=1$ and $\alpha_i=1+\varepsilon$ for all $i\neq j,\ell$,
    \item \emph{one-exact $(1+\varepsilon)$-dominance}: $x \edom{1} x' :\Leftrightarrow$ $x$ $\alpha$-approximates~$x'$, where $\alpha_1=1$ and $\alpha_i=1+\varepsilon$ for all $i\geq 2$,
	\item \emph{two-exact $(1+\varepsilon)$-dominance}: $x \edom{1,2} x' :\Leftrightarrow$ $x$ $\alpha$-approximates~$x'$, where $\alpha_1=\alpha_2=1$ and $\alpha_i=1+\varepsilon$ for all $i\geq 3$,
	\item \emph{quasi-$k$-exact $(1+\varepsilon)$-dominance}: $x \edom{(k)} x' :\Leftrightarrow$ $x$ $\alpha$-approximates~$x'$, where~$k$ components~$\alpha_i$ are equal to~$1$ and the other~$p-k$ are equal to $1+\varepsilon$.
    \item \emph{one-exact, quasi-$k$-exact $(1+\varepsilon)$-dominance}: $x \edom{1,(k)} x' :\Leftrightarrow$ $x$ $\alpha$-approximates~$x'$, where $\alpha_1=1$, $k-1$ of the other components~$\alpha_i$ are equal to~$1$, and the remaining~$p-k$ are equal to $1+\varepsilon$.\footnote{Note that this means that $\edom{1,(k)}\; =\; \edom{1} \cap  \edom{(k)}$ as subsets of~$X\times X$.}
	% \item \emph{one-exact, quasi-$k$-exact $(1+\varepsilon)$-dominance}: $x \edom{j,(k)} x' :\Leftrightarrow$ $x$ $\alpha$-approximates~$x'$, where $\alpha_j=1$, $k-1$ of the other components~$\alpha_i$ are equal to~$1$, and the remaining~$p-k$ are equal to $1+\varepsilon$.\footnote{Note that this means that $\edom{j,(k)}\; =\; \edom{j} \cap  \edom{(k)}$ as subsets of~$X\times X$.}
\end{itemize}
The relations~$\edom{1}$, $\edom{1,2}$, $\edom{(k)}$, and $\edom{1,(k)}$ will also be referred to as \emph{partially exact} (approximate dominance) relations in the following.
% The relations $\edom{j}$, $\edom{j,\ell}$, $\edom{(k)}$, and $\edom{j,(k)}$ will also be referred to as \emph{partially exact} (approximation) relations in the following.
\end{definition}

% The standard notion of approximation for multiobjective optimization problems used in the literature is the following one:

% \begin{definition}\label{def:approx-standard}
% Let~$(X,f)$ be an instance of a multiobjective optimization problem and let $\alpha=(\alpha_1,\dots,\alpha_p)\in\mathbb{R}^p$ with $\alpha_j\geq 1$ for all $j\in\{1,\dots,p\}$.
% A set~$P\subseteq X$ of feasible solutions is called an \emph{$\alpha$-approximation} for the multiobjective problem~$\Pi$ if, for any feasible solution~$x'\in X$, there exists a solution~$x\in A$ that $\alpha$-approximates~$x'$.
% \end{definition}

Given an approximate dominance relation, approximation sets for multiobjective optimization problems are defined as follows: 

\begin{definition}\label{def:R-approx}
    Let~$(X,f)$ be an instance of a multiobjective optimization problem and let~$R$ be a relation on $X\times X$.
	A set~$P\subseteq X$ of feasible solutions is called an \emph{$R$-approximation set} for~$(X,f)$ if, for any feasible solution~$x'\in X$, there exists a solution~$x\in P$ such that $x R x'$ (i.e., $x$ \emph{$R$-dominates}~$x'$).

    \smallskip

    For $\varepsilon>0$, $R$-approximation sets for the relations~$\edom{}$, $\edom{1}$, $\edom{1,2}$, $\edom{(k)}$, and $\edom{1,(k)}$ are referred to as \emph{(ordinary) $(1+\varepsilon)$-approximation sets}, \emph{one-exact $(1+\varepsilon)$-approximation sets}, \emph{two-exact $(1+\varepsilon)$-approximation sets}, \emph{quasi-$k$-exact $(1+\varepsilon)$-approximation sets}, and \emph{one-exact, quasi-$k$-exact $(1+\varepsilon)$-approximation sets}, respectively.

    % For $\varepsilon>0$, $R$-approximation sets for the relations $\edom{}$, $\edom{j}$, $\edom{j,\ell}$, $\edom{(k)}$, and $\edom{j,(k)}$ are referred to as \emph{(ordinary) $(1+\varepsilon)$-approximation sets}, \emph{one-exact $(1+\varepsilon)$-approximation sets}, \emph{two-exact $(1+\varepsilon)$-approximation sets}, \emph{quasi-$k$-exact $(1+\varepsilon)$-approximation sets}, and \emph{one-exact, quasi-$k$-exact $(1+\varepsilon)$-approximation sets}, respectively.
\end{definition}

In the following, we will often use that, since monotonic relations~$R$ are reflexive, $R$-approximation sets for monotonic relations correspond to \emph{$R$-dominating sets} for~$X$, i.e., to subsets~$P\subseteq X$ of feasible solutions such that any feasible solution~$x'\notin P$ is $R$-dominated by some solution~$x\in P$. This correspondence, in particular, holds for the monotonic relations $\edom{}$, $\edom{1}$, $\edom{1,2}$, $\edom{(k)}$, and $\edom{1,(k)}$.
 
\smallskip
 
We are interested in approximation sets whose cardinality is bounded by a polynomial in the encoding size of the given instance~$(X,f)$. For the relations~$\edom{}$, $\edom{1}$, $\edom{1,2}$, $\edom{(k)}$, and $\edom{1,(k)}$ that depend on a given value~$\varepsilon>0$, we are specifically interested in approximation sets whose cardinality is \emph{fully polynomial}, i.e., polynomial in the encoding size of the instance and in~$\frac{1}{\varepsilon}$.

\smallskip

In the literature, several auxiliary problems have been defined for the computation of (fully) polynomial-cardinality approximation sets~\cite{Papadimitriou+Yannakakis:multicrit-approx,Diakonikolas+Yannakakis:approx-pareto-sets,Vassilvitskii+Yannakakis:trade-off-curves,HBRTV21}, which are also used in the following. The first such problem, whose (fully) polynomial-time solvability has been shown to characterize the (fully) polynomial-time compatibility of $(1+\varepsilon)$-approximation sets in~\cite{Papadimitriou+Yannakakis:multicrit-approx} is the \emph{gap problem}:

\begin{definition}[Gap Problem]\label{def:gap}
Given an instance~$(X,f)$ of a $p$-objective optimization problem, a point~$b\in\QQ^p$, and some $\delta>0$, the problem $\gap_{\delta}(b)$ is the following: Either return a feasible solution~$x\in X$ with $f_i(x)\leq b_i$ for $i=1,\ldots,p$, or answer correctly that there does not exist any feasible solution~$x'$ such that $f_i(x')\leq \frac{b_i}{1+\delta}$ for all $i=1,\ldots,p$.
 \end{definition}

 The following auxiliary problem, which scalarizes the multiobjective problem via budget constraints on all but one objective function, is widely used both in practice and in the theoretical literature on multiobjective optimization:

\begin{definition}[Budget-Constrained Problem]
		Given an instance~$(X,f)$ of a multiobjective optimization problem and bounds $b_i>0$, $i = 2,\ldots,p$, for all objective functions except the first one, the problem $\constrained(b_2\ldots,b_p)$ is the following: Either answer that there does not exist a feasible solution $x' \in X$ with $f_i(x') \leq b_i$ for $i=1,\ldots,p$, or return a feasible solution that minimizes~$f_1$ among all such solutions, i.e., return $x \in X$ with \begin{align*}f_1(x) &=  \opt_1(b_2,\ldots,b_p) \colonequals  \min_{x'\in X} \{f_1(x'):\,f_i(x') \leq b_i \textnormal{ for } i = 2,\ldots,p\}, \text{ and}\\
		f_i(x) &\leq  b_i, \qquad  i = 2,\ldots, p.
		\end{align*}
% 		\noindent Either answer that there does not exist a feasible solution $x' \in X$ with $$f_i(x') \leq B_i, \qquad i = 2,\ldots, p$$ or return a feasible solution that minimizes $f_1$ among all such solutions, i.e., return $x \in X$ with \begin{align*}f_1(x) &=  \opt_1(B_2,\ldots,B_p)\\ &\colonequals  \min \{f_1(x') : x' \in X \textnormal{ and } f_i(x') \leq B_i \textnormal{ for } i = 2,\ldots,p\}\\
% 		f_i(x) &\leq  B_i&&,   i = 2,\ldots, p.
% 		\end{align*}
	\end{definition}
	
	Note that $\constrained$ is hard to solve even for the biobjective version of most relevant problems such as shortest path. A way to circumvent the hardness of $\constrained$ is to consider solutions that violate the given bounds slightly, while requiring an objective value that is at least as good as the objective value of any solution that respects the bounds~\cite{Diakonikolas+Yannakakis:approx-pareto-sets}:
 
 %    However, arbitrarily good approximations for the problem can often be obtained in polynomial time. The problem of finding a $(1+\delta)$-approximation for $\constrained$ for any $\delta > 0$ is called the \emph{restricted problem}~\cite{Diakonikolas+Yannakakis:approx-pareto-sets}:
	
	% \begin{definition}[Restricted Problem]
	% 	Given an instance~$(X,f)$ of a multiobjective optimization problem, bounds $b_i>0$, $i = 2,\ldots,p$, for all objective functions except the first one, and some~$\delta>0$, the problem $\restrict_{\delta}(b_2,\ldots, b_p)$ is the following: Either answer that there does not exist a feasible solution $x' \in X$ with $f_i(x') \leq b_i$ for $i=1,\ldots,p$, or return $x \in X$ with 
	% 	\begin{align*}
 %            f_1(x) &\leq (1+\delta) \cdot \opt_1(b_2,\ldots,b_p)&\\
	% 	  f_i(x) &\leq \phantom{(1+\delta) \cdot } b_i,&&   i = 2,\ldots, p.
 %    \end{align*}	
	% \end{definition}
	
	% An alternative way of circumventing the hardness of $\constrained$ is to consider solutions that violate the given bounds slightly, while requiring an objective value that is at least as good as the objective value of any solution that respects the bounds~\cite{Diakonikolas+Yannakakis:approx-pareto-sets}:
	
	\begin{definition}[{\normalfont$\dualrestrict$}]
		Given an instance~$(X,f)$ of a multiobjective optimization problem, bounds $b_i>0$, $i = 2,\ldots,p$, for all objective functions except the first one, and some~$\delta>0$, the problem $\dualrestrict_{\delta}(b_2,\ldots, b_p)$ is the following: Either answer that there does not exist a feasible solution $x' \in X$ with $f_i(x') \leq b_i$ for $i=1,\ldots,p$, or return $x \in X$ with 
		\begin{align*}f_1(x) &\leq \phantom{(1+\delta) \cdot} \opt_1(b_2,\ldots,b_p)&\\
		f_i(x) &\leq (1+\delta) \cdot b_i,&&   i = 2,\ldots, p.\end{align*}	
	\end{definition}
	
	% Note that, in an instance of $\dualrestrict$, the case might occur where there does not exist any feasible solution $x' \in X$ with $f_i(x') \leq b_i$ for $i = 2,\ldots, p$, but there exists a solution~$x \in X$ with $f_i(x) \leq (1+\delta) \cdot b_i$ for $i = 2,\ldots, p$. In this case, $\textnormal{NO}$ is a correct answer to the $\dualrestrict$ instance, but, since $\opt_1(b_2, \ldots, b_p) = \infty > f_1(x)$, also~$x$ is a correct answer. Thus, for $\dualrestrict$, there are situations where both of the distinguished cases apply whereas, for $\constrained$ and $\restrict$, the two cases are disjoint. Also note that $\constrained$ can be viewed as the limit case where $\delta = 0$ for both $\restrict$ and $\dualrestrict$. 

    Note that $\constrained$ can be viewed as the limit case where $\delta = 0$ in $\dualrestrict$. 

    \smallskip
	
	The auxiliary problems $\constrained$ and $\dualrestrict$ can also be defined such that, instead of the first one, some other objective is to be optimized subject to budgets on the remaining objectives. In the following, we sometimes indicate which objective is optimized by an upper index. For example, $\dualrestrict^i$ denotes the $\dualrestrict$ problem with a bound on all objectives but the $i$-th one.

% \smallskip
%  \noindent
%  Some of our results require the following auxiliary lemma.
 
%  \begin{lemma}\label{lem:weakly}
%  Consider an instance of a $p$-objective optimization problem in which each objective function takes at most $n$~different values. Then the weakly nondominated set~$\YWN$ has cardinality in $\mathcal{O}(n^{p-1})$.
%  \end{lemma}
%  \begin{proof}
%  Assume without loss of generality that each objective takes values in the set $\{1,\ldots,n\}$. We call a sequence of points $(y_1,\ldots,y_p),\ldots, (y_1+t,\ldots,y_p+t)$ in the objective space such that $y_i = 1$ for at least one of $i \in \{1,\ldots,n\}$ and $y_j +t = n$ for at least one $j \in \{1,\ldots,n\}$ a \emph{diagonal}. Then, any set consisting only of weakly nondominated points can contain at most one point from each diagonal. Since the total number of diagonals is in $\mathcal{O}(p\cdot n^{p-1}) = \mathcal{O}(n^{p-1})$, this shows the claim.
%  %\qed
%  \end{proof}

 \section{Existence of Approximation Sets}
 
 %\subsection{A general existence result}
 
 We start by investigating the existence of $R$-approximations of (fully) polynomial cardinality under weak assumptions on the approximate dominance relation~$R$. For the $(1+\varepsilon)$-dominance relation~$R = \,\edom{}$, this is established in the seminal work of Papadimitriou and Yannakakis~\cite{Papadimitriou+Yannakakis:multicrit-approx}, who show the existence of $(1+\varepsilon)$-approximations of fully polynomial cardinality $\mathcal{O}((\frac M \varepsilon)^{p-1})$. 
 
 %The existence is known for the multiobjective shortest path problem, as shown in \cite{Tsaggouris+Zaroliagis:mult-shortest-path}, where the authors even provide a polynomial-time algorithm to generate an approximation~$A_{\varepsilon}^i$.
 
We now generalize the existence proof from~\cite{Papadimitriou+Yannakakis:multicrit-approx} to more demanding approximate dominance relations~$R$ that refine~$\preceq_{\varepsilon}$, which, as shown afterwards in Theorem~\ref{thm:specific-existence}, include the partially exact approximate dominance relations~$\edom{1}$ and~$\edom{(k)}$. %In the following theorem and its proof, we call a subset~$X'\subseteq X$ of feasible solutions \emph{rectangular} if there exits
 
 \begin{theorem}\label{thm:generalexistence}
    Let~$(X,f)$ be an instance of a $p$-objective optimization problem and let $\varepsilon >0$. Consider a monotonic relation~$R'$ defined on $X\times X$ and let $R\colonequals\edom{} \cap\, R'$. If there exists a constant~$c$ (independent of the instance size and~$\frac{1}{\epsilon}$) such that every nonempty subset $X'\subseteq X$ admits an $R'$-dominating set~$P_{X'}\subseteq X'$ of cardinality at most~$c$, then there exists an $R$-approximation set of cardinality $\mathcal{O}\bigl(\bigl(\frac M \varepsilon\bigr)^{p-1}\bigr)$. % for which $f(X')\subseteq [l_1,u_1] \times \ldots \times [l_p,u_p]$ for some~$l,u\in\RR^p$.
 % Consider a relation~$R'$ defined on $X\times X$ and let $R\colonequals\preceq_{\varepsilon} \cap\, R'$.
 % For an instance~$(X,f)$ of a $p$-objective optimization problem and $\varepsilon >0$, there exists an $R$-approximation set of cardinality $\mathcal{O}\bigl(\bigl(\frac M \varepsilon\bigr)^{p-1}\bigr)$ if %( and only if ) 
 % there exists an $R'$-dominating set of constant cardinality for any nonempty subset $X'\subseteq X$ for which $f(X')\subseteq [l_1,u_1] \times \ldots \times [l_p,u_p]$ for some~$l,u\in\RR^p$.
\end{theorem}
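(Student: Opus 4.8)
The plan is to generalize the geometric-grid argument of Papadimitriou and Yannakakis~\cite{Papadimitriou+Yannakakis:multicrit-approx}. First, using Assumption~\ref{assum:M}, I would partition the range $[2^{-M},2^M]$ of each of the objectives $f_2,\dots,f_p$ into the $\mathcal{O}(M/\varepsilon)$ geometric intervals $[(1+\varepsilon)^\ell,(1+\varepsilon)^{\ell+1})$. This induces a partition of the image space into $\mathcal{O}((M/\varepsilon)^{p-1})$ \emph{columns} (cells in coordinates $2,\dots,p$), leaving coordinate~$1$ deliberately ungridded --- exactly as in the proofs for $\edom{}$ in~\cite{Papadimitriou+Yannakakis:multicrit-approx} and for $\edom{1}$ in~\cite{HBRTV21}, where coordinate~$1$ is instead handled by minimization. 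Since $\edom{}$ and $R'$ are both monotonic, so is $R=\edom{}\cap R'$, and hence (as noted after Definition~\ref{def:R-approx}) it suffices to construct an $R$-dominating set of the claimed cardinality.

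The structural fact I would exploit is that, within a single column, the values of $f_2,\dots,f_p$ of any two solutions agree up to a factor $1+\varepsilon$, so for $x,x'$ in the same column one has $x\edom{}x'$ if and only if $f_1(x)\le(1+\varepsilon)f_1(x')$; moreover a solution lying in a ``lower'' column (smaller values in coordinates $2,\dots,p$) with sufficiently small $f_1$ can $\edom{}$-dominate solutions in higher columns as well. The task thus reduces to selecting, per column, a bounded number of solutions that are simultaneously small in $f_1$ (to take care of the $\edom{}$-part) and $R'$-dominating (to take care of the $R'$-part). Here I would invoke the hypothesis: applying the bounded-$R'$-dominating-set assumption to a suitable subset of the column --- for instance its lowest $f_1$-band $\{x\in X_C: f_1(x)\le(1+\varepsilon)\min_{X_C}f_1\}$, and then iterating on the still-undominated part --- yields at most~$c$ representatives at a time, and monotonicity of $R'$ guarantees that these low-$f_1$ representatives $R'$-dominate the solutions they are responsible for while at the same time $\edom{}$-dominating them.

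The step I expect to be the main obstacle is the cardinality bookkeeping, namely showing that the total number of selected solutions is $\mathcal{O}((M/\varepsilon)^{p-1})$ rather than $\mathcal{O}((M/\varepsilon)^{p})$. The danger is that, for a \emph{fixed} column, combining the $(1+\varepsilon)$-requirement in coordinate~$1$ with $R'$-domination could a~priori force representatives at each of the $\Theta(M/\varepsilon)$ distinct $f_1$-scales (e.g.\ if no bounded low-$f_1$ subset $R'$-dominates the whole column), which would cost an extra factor of $M/\varepsilon$. The crux is therefore to show that this blow-up cannot occur globally: that the single free coordinate is ``paid for'' essentially once, as in the min-$f_1$ trick of~\cite{Papadimitriou+Yannakakis:multicrit-approx,HBRTV21}. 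I would establish this by a counting argument that uses the full strength of the hypothesis --- that \emph{every} subset of~$X$, and not merely each column, admits an $R'$-dominating set of size at most~$c$ --- together with monotonicity and the cross-column downward $\edom{}$-domination noted above, so that only ``frontier'' columns contribute their full $f_1$-depth while higher columns are absorbed by low-$f_1$ representatives of lower ones, and the contributions sum to $\mathcal{O}((M/\varepsilon)^{p-1})$. Checking that this construction specializes back to the min-$f_1$ selection for $R'$ trivial (recovering~\cite{Papadimitriou+Yannakakis:multicrit-approx}) and for $R'=\{(x,x'):f_1(x)\le f_1(x')\}$ (recovering the $\edom{1}$-result of~\cite{HBRTV21}) would serve as a consistency check.
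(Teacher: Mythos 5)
Your high-level plan --- grid only coordinates $2,\dots,p$ and handle the first coordinate by selecting low-$f_1$ representatives --- works for $R'$ trivial (recovering~\cite{Papadimitriou+Yannakakis:multicrit-approx}) and for $R'=\{(x,x'):f_1(x)\le f_1(x')\}$ (recovering~\cite{HBRTV21}) precisely because, in those two cases, a single minimum-$f_1$ solution of a column $R'$-dominates the entire column. For a general monotonic $R'$ this fails, and your argument breaks exactly at the point you yourself flag as ``the crux.'' Monotonicity of $R'$ does \emph{not} guarantee that low-$f_1$ representatives $R'$-dominate the rest of their column: monotonicity applies only to pairs that are componentwise ordered, which two solutions in a common column need not be (their values in coordinates $2,\dots,p$ agree only up to a factor $1+\varepsilon$, in either direction). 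So the $R'$-dominating set of the lowest $f_1$-band covers, in the $R'$-sense, only that band, and your iteration can genuinely run for $\Theta(M/\varepsilon)$ rounds inside a single column. The proposed rescue --- that globally only ``frontier'' columns pay their full $f_1$-depth while higher columns are absorbed by lower ones --- is asserted but never proved; it is not routine bookkeeping but the entire content of the theorem. As written, your construction only yields the trivial bound $\mathcal{O}((M/\varepsilon)^{p})$.

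The paper closes this gap by a different and simpler device: it grids \emph{all} $p$ coordinates into cells of ratio $1+\varepsilon$. Inside one cell, $\edom{}$-domination holds automatically between any two solutions (in every coordinate, including the first), so the hypothesized $R'$-dominating set of size at most~$c$ of a cell's preimage is already an $R$-dominating set for that cell --- no interplay between an $f_1$-selection and the $R'$-selection has to be engineered. The saving of one factor $M/\varepsilon$ then comes from a counting observation rather than from leaving a coordinate ungridded: by monotonicity of~$R$, only \emph{weakly nondominated} nonempty cells need representatives; the cells whose corner vectors lie on a common line through the origin (a ``diagonal'') are totally ordered by componentwise dominance, so each diagonal contains at most one weakly nondominated nonempty cell; and the number of diagonals is $\mathcal{O}\bigl(p\cdot\bigl\lceil\log_{1+\varepsilon}\bigl(2^{2M}\bigr)\bigr\rceil^{p-1}\bigr)=\mathcal{O}\bigl(\bigl(\frac{M}{\varepsilon}\bigr)^{p-1}\bigr)$, giving a total of at most $c$ times that many selected solutions. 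If you try to make your column-based procedure rigorous, you will be forced to introduce exactly this cross-column absorption --- i.e., to partition each column into $(1+\varepsilon)$-bands (a full grid in disguise) and discard every band that is componentwise dominated by a nonempty one --- at which point you have reproduced the paper's proof.
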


\begin{proof}
Consider the hyperrectangle $[2^{-M},2^M] \times \ldots \times [2^{-M},2^M]$, in which all images of feasible solutions are contained. We create a grid by subdividing this hyperrectangle into smaller hyperrectangles such that, in each dimension, the ratio of the larger to the smaller coordinate is $1 + \varepsilon$. Thus, the total number of subdivisions in each dimension is $\left\lceil\log_{1+\varepsilon}\left(\frac{2^M}{2^{-M}}\right)\right\rceil=\left\lceil\log_{1+\varepsilon}\left(2^{2M}\right)\right\rceil$. 
By the assumption on~$R'$, any subset~$X'$ of feasible solutions admits an $R'$-dominating set~$P_{X'}\subseteq X'$ of cardinality at most~$c$. Moreover, any two solutions~$x$ and~$x'$ with images in the same hyperrectangle of the grid satisfy $x \preceq_{\varepsilon} x'$ by construction of the grid. Thus, if~$X'$ is the preimage of a nonempty hyperrectangle of the grid, then the $R'$-dominating set~$P_{X'}$ is actually an $R$-dominating set for~$X'$. Consequently, forming the union of the sets~$P_{X'}$ over all subsets~$X'$ that are preimages of nonempty hyperrectangles of the grid yields an $R$-approximation set. Moreover, since~$R$ is monotonic, it suffices to consider only weakly nondominated nonempty hyperrectangles.

To bound the cardinality of this $R$-approximation set, we note that, for each hyperrectangle $[l_1,u_1]\times\cdots\times[l_p,u_p]$ of the grid, we have $u_i = (1+\varepsilon)\cdot l_i$ for all $i\in\{1,\ldots,p\}$, so the vectors~$l=(l_1,\ldots,l_p)$ and~$u=(u_1,\ldots,u_p)$ of lower and upper bounds of the hyperrectangle lie on a straight line through the origin. Calling the set of all hyperrectangles of the grid whose bounds lie on the same straight line through the origin a \emph{diagonal}, we see that (1) at most one hyperrectangle on each diagonal is weakly nondominated and nonempty, and (2) the number of diagonals is in $\mathcal{O}\bigl(p\cdot\bigl\lceil\log_{1+\varepsilon}\bigl(2^{2M}\bigr)\bigl\rceil^{p-1}\bigr)=\mathcal{O}\bigl(\bigl(\frac M \varepsilon\bigr)^{p-1}\bigr)$. Hence, since the cardinality of the constructed $R$-approximation set is at most $c$~times the number of  weakly nondominated nonempty hyperrectangles and~$c$ is a constant, its cardinality is also in $\mathcal{O}\bigl(\bigl(\frac M \varepsilon\bigr)^{p-1}\bigr)$.
% Moreover, since there are at most $\left\lceil\log_{1+\varepsilon}\left(2^{2M}\right)\right\rceil$ different possible lower bounds for nonempty hyperrectangles in each dimension, Lemma~\ref{lem:weakly} yields that the number of weakly nondominated nonempty hyperrectangles (and, thus, also the cardinality of the set) is in $\mathcal{O}\bigl(\bigl\lceil\log_{1+\varepsilon}\bigl(2^{2M}\bigr)\bigl\rceil^{p-1}\bigr)=\mathcal{O}\bigl(\bigl(\frac M \varepsilon\bigr)^{p-1}\bigr)$.
 \end{proof}

We remark that the proof of Theorem~\ref{thm:generalexistence} also yields an $R$-approximation set of (fully) polynomial cardinality if the cardinality of the $R'$-dominating set~$P_{X'}$ for each subset~$X'\subseteq X$ is only required to be polynomial in the instance size (and in~$\frac{1}{\varepsilon}$) instead of constant. In this case, the cardinality of the obtained $R$-approximation set would be upper bounded by $\mathcal{O}\bigl(\bigl(\frac M \varepsilon\bigr)^{p-1}\bigr)$ times the worst-case cardinality of~$P_{X'}$ taken over all preimages~$X'$ of weakly nondominated nonempty hyperrectangles of the grid.

\smallskip

The following theorem shows that the general result in Theorem~\ref{thm:generalexistence}, in particular, yields existence results for $(1+\varepsilon)$-approximation sets, one-exact $(1+\varepsilon)$-approximation sets, and quasi-$k$-exact $(1+\varepsilon)$-approximation sets of fully polynomial cardinality. Here the existence results for $(1+\varepsilon)$-approximation sets and for one-exact $(1+\varepsilon)$-approximation sets, which are already known from~\cite{Papadimitriou+Yannakakis:multicrit-approx} and~\cite{HBRTV21}, respectively, follow immediately from the theorem. The result concerning quasi-$k$-exact $(1+\varepsilon)$-approximation sets, however, has not been shown in the literature before and requires a more involved proof, which is illustrated in Figure~\ref{fig:existence-quasiexact}.

\begin{theorem}\label{thm:specific-existence}
 For any instance $(X,f)$ of a $p$-objective optimization problem and any $\varepsilon >0$,  there exists an $R$-approximation set of cardinality $\mathcal{O}((\frac M \varepsilon)^{p-1})$
 \begin{enumerate}
     \item[1)] for $R = \,\edom{}$ ($(1+\varepsilon)$-approximation set),
     \item[2)] for $R = \,\edom{1}$ (one-exact $(1+\varepsilon)$-approximation set),
     \item[3)] for $R = \,\edom{(k)}$ for any $k\leq \lceil \frac{p}{2}\rceil$ (quasi-$k$-exact $(1+\varepsilon)$-approximation set).
 \end{enumerate}
 \end{theorem}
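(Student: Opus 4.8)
The plan is to obtain all three parts from Theorem~\ref{thm:generalexistence} by exhibiting, for each target relation~$R$, a monotonic relation~$R'$ with $R = \edom{}\cap\,R'$ that admits $R'$-dominating sets of \emph{constant} cardinality for every nonempty subset $X'\subseteq X$. The guiding principle is to let~$R'$ encode only the \emph{exactness} requirements of~$R$ and to leave the $(1+\varepsilon)$-part entirely to~$\edom{}$ (and hence to the grid used in the proof of Theorem~\ref{thm:generalexistence}); this keeps~$R'$ weak enough to be dominated by few solutions. Once a constant~$c$ is available in each case, Theorem~\ref{thm:generalexistence} immediately yields the claimed bound $\mathcal{O}((\tfrac M\varepsilon)^{p-1})$.

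Parts~1) and~2) are then routine. For~1), I would take $R'\colonequals X\times X$, which is trivially monotonic, satisfies $\edom{}\cap\,R' = \edom{}$, and is dominated by any single solution, so $c=1$. For~2), I would take~$R'$ to be single-objective dominance in the first objective, $x\,R'\,x' :\Leftrightarrow f_1(x)\leq f_1(x')$. This relation is monotonic, and intersecting it with~$\edom{}$ retains the factor~$1$ in objective~$1$ and the factor~$1+\varepsilon$ in all others, so $\edom{}\cap\,R' = \edom{1}$. Since, by Assumption~\ref{assum:M}, the set of values attained by~$f_1$ on~$X'$ is finite, a minimizer~$x^\ast$ of~$f_1$ over~$X'$ exists and $R'$-dominates all of~$X'$, giving $c=1$.

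For~3), the natural choice is the relation~$R'_k$ defined by $x\,R'_k\,x' :\Leftrightarrow f_i(x)\leq f_i(x')$ for at least~$k$ indices~$i$. It is monotonic (coordinatewise~$\leq$ forces domination in all $p\geq k$ objectives), and $\edom{}\cap\,R'_k = \edom{(k)}$, since~$\edom{}$ supplies the factor~$1+\varepsilon$ in every objective while~$R'_k$ supplies exactness in some~$k$ of them. Everything thus reduces to the combinatorial core: \emph{every nonempty $X'\subseteq X$ admits an $R'_k$-dominating set whose cardinality is bounded by a constant depending only on~$p$ and~$k$}. Here I would first pass to the tight case by fixing any $2k-1$ of the objectives: the hypothesis $k\leq\lceil\frac p2\rceil$ is equivalent to $p\geq 2k-1$, and $k$-domination within a fixed $(2k-1)$-subset of coordinates implies $k$-domination overall, so it suffices to bound the $R'_k$-domination number for points in~$\QQ^{2k-1}$. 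In this case~$R'_k$ is \emph{total}: for any two solutions the two complementary sets of coordinates have sizes summing to $2k-1$, so one of them has size $\geq k$, and one solution $k$-dominates the other.

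The main obstacle is to upgrade this totality into a \emph{constant} (rather than logarithmic) cardinality bound. The relation~$R'_k$ restricted to~$X'$ is exactly a $k$-majority tournament induced by the $2k-1$ linear orders coming from the objective values, and while general tournaments may have domination number growing with the number of vertices, such $k$-majority tournaments have domination number bounded solely in terms of~$k$; this is precisely what must be exploited. The plan is to establish the bound geometrically, as illustrated in Figure~\ref{fig:existence-quasiexact}: simple examples (e.g.\ coordinatewise-extreme solutions such as $(1,20,20),(20,1,20),(20,20,1)$ failing to $2$-dominate the balanced point $(10,10,10)$) show that one must select solutions that are \emph{simultaneously good in a majority of objectives} rather than optimal in a single one. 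Concretely, I would iteratively pick such balanced solutions (for instance, minimizing a suitable median or weighted aggregate of the coordinates) and, via an averaging/weighting (LP-duality) argument tailored to the total relation~$R'_k$, show that a number of rounds depending only on~$k$ already covers all of~$X'$, thereby extracting an explicit constant $c = c(p,k)$. Carrying out this weighting argument and pinning down~$c$ is the technical heart of the proof; feeding~$c$ into Theorem~\ref{thm:generalexistence} then completes part~3).
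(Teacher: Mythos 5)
Your choices of~$R'$ coincide exactly with the paper's in all three parts, and parts~1) and~2) are complete and correct as written (including the use of Assumption~\ref{assum:M} to guarantee that a minimizer of~$f_1$ over~$X'$ exists). For part~3) you also follow the paper's route up to the decisive point: you pick the same relation~$R'_k$, verify monotonicity and $\edom{}\cap R'_k = \edom{(k)}$, and correctly reduce everything to showing that the ``better in at least~$k$ of the objectives'' relation has domination number bounded by a constant depending only on~$k$ — i.e., to bounding the domination number of a $k$-majority tournament. (Your reduction to a fixed set of $2k-1$ coordinates is a valid variant of the paper's observation that the graph built from all~$p$ orders is a supergraph of a $k$-majority tournament; both work, modulo the minor points of breaking ties to get linear orders and noting that $f(X')$ is finite by Assumption~\ref{assum:M}.)

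However, at exactly this point the proposal stops being a proof. The paper closes the step by citing the theorem of Alon, Brightwell, Kierstead, Kostochka, and Winkler~\cite{ABKKW06}: every finite $k$-majority tournament admits a dominating set of cardinality $\mathcal{O}(k\log k)$, a bound independent of the number of vertices. This is a genuinely deep combinatorial result, not a routine lemma. What you offer instead — iteratively picking ``balanced'' solutions and an unspecified averaging/LP-duality argument showing that a number of rounds depending only on~$k$ suffices — is a plan to reprove that theorem, and you yourself flag it as the unproven ``technical heart.'' Totality of the relation on $2k-1$ coordinates does not rescue the argument: arbitrary tournaments on~$n$ vertices can have domination number $\Theta(\log n)$, so the entire difficulty is precisely to show that $k$-majority structure forces a bound independent of~$\lvert X'\rvert$, which is what~\cite{ABKKW06} establishes and what your sketch does not. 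So there is a genuine gap in part~3): either invoke the $k$-majority-tournament theorem as a known result (as the paper does), or actually carry out the weighting argument, which would amount to reproducing the proof of~\cite{ABKKW06}; once that constant~$c$ is in hand, feeding it into Theorem~\ref{thm:generalexistence} finishes the proof exactly as you describe.
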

 
 \begin{proof}\hfill
 \begin{enumerate}
     \item[1)] Choosing $R' \colonequals X \times X$ in Theorem~\ref{thm:generalexistence} yields $R= \edom{}$. Moreover, $R'$ is clearly monotonic and, for any nonempty subset~$X'\subseteq X$, any solution~$x\in X'$ constitutes an $R'$-dominating set, so the result follows from Theorem~\ref{thm:generalexistence}.
     \item[2)] Choosing $R'\colonequals \{(x,x') \in X \times X: f_1(x)\leq f_1(x')\}$ in Theorem~\ref{thm:generalexistence} yields $R= \edom{1}$. Again, $R'$ is clearly monotonic. Moreover, given any nonempty subset~$X'\subseteq X$, any solution~$x \in X'$ such that $f_1(x) = \min_{x'\in X'} f_1(x')$ constitutes an $R'$-dominating set (note that, by Assumption~\ref{assum:M}, the number of possible images is finite, so the minimum is guaranteed to exist).
     \item[3)] Given $k\leq \lceil \frac{p}{2}\rceil$, choosing
     \begin{align*}
         R'
         % \colonequals \preceq^{(k)}
         \colonequals \{(x,x') \in X \times X: f_j(x)\leq f_j(x')\mbox{ for at least } k \mbox{ objectives}~f_j\}, 
     \end{align*}     
     in Theorem~\ref{thm:generalexistence} yields $R = \,\edom{(k)}$. 
%      Moreover, for any two feasible solutions~$x,x'\in X$, either $x \preceq^{(k)} x'$ or $x' \preceq^{(k)} x$ must hold. This follows since, if $x \preceq^{(k)} x'$ does not hold, then there are at most $k-1$~objectives~$f_j$ such that $f_j(x) \leq f_j(x')$. Thus, since $k \leq  \lceil \frac{p}{2}\rceil$, there are at least $p-k+1\geq k$ objectives~$f_j$ such that $f_j(x') < f_j(x)$, which implies that $x' \preceq^{(k)} x$.
% Therefore, the relation~$\preceq^{(k)}$   induces a directed graph on the feasible set~$X$ in which every pair of distinct vertices is connected either by a single arc or two arcs in opposite directions, i.e., a digraph containing a \emph{tournament}.  
    Since $R'$ is clearly monotonic, it remains to show that there exists an $R'$-dominating set of constant cardinality for any nonempty subset~$X'\subseteq X$. To show this, we use a result from~\cite{ABKKW06} about dominating sets in \emph{$k$-majority tournaments}. A $k$-majority tournament is a directed graph on a finite vertex set~$V$ where, given $2k-1$ linear orders of~$V$, there exists a directed arc from~$u$ to~$v$ if and only if~$u$ is ranked higher than~$v$ in at least~$k$ of the orders. The result from~\cite{ABKKW06} states that any $k$-majority tournament admits a dominating set of cardinality $\mathcal{O}(k\log k)$.

    % In our situation, each objective~$j$ defines a linear order on~$f(X')$ by ranking~$f(x)$ higher than~$f(x')$ if and only if $f_j(x)\leq f_j(x')$. 
     In our situation, given some nonempty subset~$X'\subseteq X$, each objective~$f_j$ defines a reflexive, transitive, and strongly connected order on~$f(X')$ by ranking~$f(x)$ higher than~$f(x')$ if and only if $f_j(x)\leq f_j(x')$. This order can be made antisymmetric (i.e., turned into a linear order) by breaking ties within any subset of elements that have the same $f_j$-value via an arbitrary linear order on this subset. Since $k \leq  \lceil \frac{p}{2}\rceil$, this construction yields $p\geq 2k-1$ linear orders on~$f(X')$. Moreover, by Assumption~\ref{assum:M}, the number of possible images the objective space is finite, so $f(X')$ is a finite set. Consequently, the directed graph on~$f(X')$ in which there exists a directed arc from~$f(x)$ to~$f(x')$ if and only if~$f(x)$ is ranked higher than~$f(x')$ in at least~$k$ of the orders is finite and contains a $k$-majority tournament on~$f(X')$ (since, in case that $p>2k-1$, the additional $p-2k+1$~linear orders only lead to additional arcs in the graph). Consequently, by the result of~\cite{ABKKW06}, the graph contains a dominating set of constant cardinality $\mathcal{O}(k\log k)$. By construction of the graph, choosing one corresponding solution~$x$ for each image in this dominating set yields an $R'$-dominating set for~$X'$.\qedhere
 \end{enumerate}
 \end{proof}
\begin{figure}[ht!]
	\begin{center}
		\includegraphics[width = \textwidth]{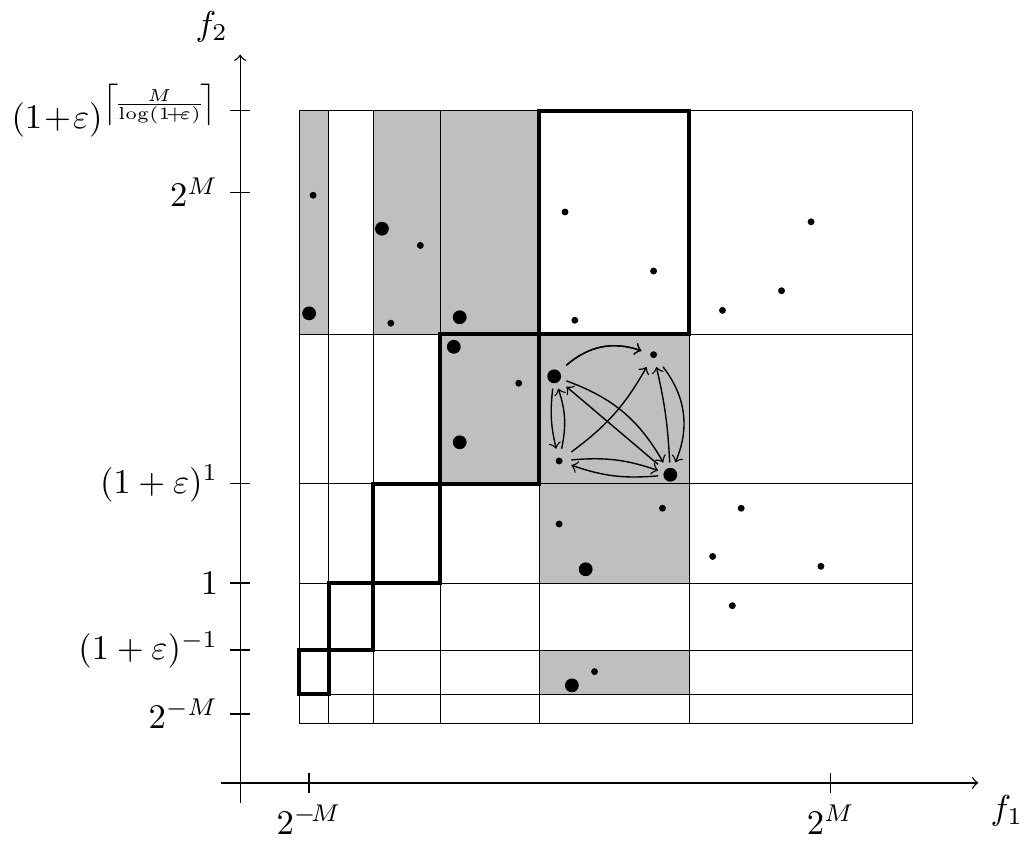}
		\caption[Illustration of the proof of Theorem~\ref{thm:existence_disjapprox}]{\textbf{Existence of quasi-$k$-exact $(1+\varepsilon)$-approximation sets of fully polynomial
 cardinality.} For each weakly nondominated nonempty hyperrectangle in the objective space as in the proof of Theorem~\ref{thm:generalexistence}, a dominating set for the supergraph of a $k$-majority tournament defined on the images in the hyperrectangle is selected to obtain a quasi-$k$-exact $(1+\varepsilon)$-approximation set. There exists at most one weakly nondominated nonempty hyperrectangle in each diagonal of the grid. A quasi-$k$-exact $(1+\varepsilon)$-approximation set is given by preimages of the bold points. Weakly nondominated nonempty hyperrectangles are indicated by shaded boxes. One diagonal is indicated by the bold lines. The constructed supergraph of a $k$-majority tournament is illustrated for one hyperrectangle.}\label{fig:existence-quasiexact}
	\end{center}
\end{figure}

% Figure~\ref{fig:existence-quasiexact} illustrates the proof of the existence of a quasi-$k$-exact $(1+\varepsilon)$-approximation set of fully polynomial cardinality for any $k\leq \lceil \frac{p}{2}\rceil$.

% \smallskip

In the remainder of this section, we show that the existence results for one-exact $(1+\varepsilon)$-approximation sets and quasi-$k$-exact $(1+\varepsilon)$-approximation sets with $k=\lceil \frac{p}{2}\rceil$ from Theorem~\ref{thm:specific-existence} are tight in the sense that polynomial-cardinality $(1+\varepsilon)$-approximation sets with additional exact components do not exist in general. This is made precise for the case of one-exact $(1+\varepsilon)$-approximation sets in the following theorem.

% The next natural question deals with the existence of tractable approximations which are exact on more than one objective. The answer is negative for most standard multiobjective combinatorial optimization problems, even when considering exact approximations on two objectives only, and even if it is not known for which second objective the approximation is exact (that is, considering $\preceq_{\varepsilon}^{i,(2)} $-approximations). 

% The following theorem states that, for many important multiobjective problems, one-exact $(1+\varepsilon)$-approximation sets cannot be improved by additional exact components in general while still maintaining polynomial cardinality.
 
 \begin{theorem}\label{thm:nonexistance1}
 There exist instances of many $p$-objective combinatorial optimization problems, including the multiobjective versions of shortest path, minimum spanning tree, assignment, minimum $s$-$t$-cut, knapsack, and traveling salesman, that do not admit a polynomial-cardinality one-exact, quasi-$2$-exact $(1+\varepsilon)$-approximation set for any $\varepsilon>0$.
 %$\edom{j,(2)}$-approximation sets for any $j\in \{1,\ldots,p\}$ and any $\varepsilon>0$.
  \end{theorem}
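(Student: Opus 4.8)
The plan is to exhibit, for every $\varepsilon>0$, a single family of instances whose feasible solutions have exponentially many pairwise distinct images that cannot be covered by a subexponential number of solutions under the relation $\edom{1,(2)}$. The guiding principle is that $\edom{1,(2)}$ forces exactness in the first objective \emph{and} in at least one further objective. Hence, if the instance is designed so that \emph{every} objective other than the first is strictly anti-monotone in~$f_1$, then exactness in~$f_1$ together with exactness in any single further objective collapses to equality in~$f_1$, and therefore, by the anti-monotone coupling, to equality in all objectives. Consequently each feasible solution can only be $\edom{1,(2)}$-dominated by solutions sharing its entire image, so any $\edom{1,(2)}$-approximation set must realize every distinct image point.

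Concretely, I would start from the classical exponential-Pareto-set construction for biobjective shortest path: a chain of $n$ parallel ``diamond'' gadgets, the $i$-th offering the two edge cost vectors $(2^{i-1},0)$ and $(0,2^{i-1})$. This produces $2^n$ paths whose images $(f_1,f_2)$ satisfy $f_1+f_2=C$ with $C=2^n-1$ and have pairwise distinct $f_1$-values (distinct subset sums). I would then lift this to~$p$ objectives by copying the second coordinate into every coordinate $j\geq 2$, so that $f_2=f_3=\cdots=f_p=C-f_1$; a uniform positive shift applied to all edges restores strict positivity as required by Assumption~\ref{assum:M} while only replacing $C$ by a larger constant $C'$ in the identity $f_j=C'-f_1$.

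The core step is then a short verification. If $x \edom{1,(2)} x'$, then $f_1(x)\leq f_1(x')$ and $f_j(x)\leq f_j(x')$ for some $j\geq 2$; since $f_j=C'-f_1$, the second inequality reads $f_1(x)\geq f_1(x')$, forcing $f_1(x)=f_1(x')$ and hence $f(x)=f(x')$ (the remaining $(1+\varepsilon)$-approximate conditions are then vacuously satisfied, which is why the argument is independent of~$\varepsilon$). Because the $2^n$ images are pairwise distinct, any $\edom{1,(2)}$-approximation set must contain a preimage of each of them, so its cardinality is at least $2^n$, which is superpolynomial in the encoding length $\mathcal{O}(n^2)$ of the instance for every fixed $\varepsilon>0$.

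The main obstacle is conceptual rather than computational: one must ensure that the freedom granted by the quasi-relation, namely the choice of \emph{which} additional objective is matched exactly, cannot be exploited to merge solutions. This is exactly what duplicating $f_2$ into all higher coordinates achieves, since it leaves no ``cheap'' objective available to match approximately while still gaining an exact component. The remaining work is to realize the same anti-monotone image structure inside each of the other named problem classes (minimum spanning tree, assignment, minimum $s$-$t$-cut, knapsack, and traveling salesman); for each of these, instances with exponentially many Pareto-optimal solutions lying on such an antichain with distinct $f_1$-values are classical, and the $p$-objective lift by coordinate duplication applies verbatim, yielding the claimed non-existence for every $\varepsilon>0$.
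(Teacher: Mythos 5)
Your proposal is correct and follows essentially the same route as the paper: both lift an intractable biobjective instance to $p$ objectives by duplicating the second objective into all coordinates $j\geq 2$, so that the freedom of choosing which additional objective is matched exactly becomes worthless, and then conclude that exponentially many solutions are forced. The only difference is that you instantiate a concrete diamond-gadget construction whose images lie on the line $f_1+f_2=C'$ and derive image equality from this anti-monotone coupling, whereas the paper argues abstractly from the nondominance of the exponentially many points of an arbitrary intractable biobjective instance --- a minor variation, not a different method.
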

\begin{proof}
The biobjective versions of all of these problems are known to be intractable, i.e., there exist instances for which the cardinality of the nondominated set is exponential in the instance size. From such an intractable biobjective instance, construct a $p$-objective instance where objective~$f_1$ corresponds to the first objective function of the biobjective instance and the other $p-1$~objective functions correspond to the second objective function of the biobjective instance. Then, at least one solution corresponding to each nondominated point of the biobjective instance is required in any one-exact, quasi-$2$-exact $(1+\varepsilon)$-approximation set of the $p$-objective instance.
\end{proof}

%  \begin{theorem}
%  Many $p$-objective combinatorial optimization problems, including the multiobjective versions of shortest path, minimum spanning tree, assignment, min $s-t$ cut, knapsack, traveling salesman, do not admit a   polynomial size (in the input size and $1/\varepsilon$) $\preceq_{\varepsilon}^{i,\ell}$-approximation for any $i,\ell \in \{1,\ldots,p\}, i\neq \ell$.
%   \end{theorem}
% \begin{proof}
% The biobjective versions of the cited problems are known to be intractable. A  polynomial-size   $\preceq_{\varepsilon}^{i,\ell}$-approximation for any $i,\ell\in \{1,\ldots,p, i\neq \ell\}$, where objectives $f_i$ and $f_\ell$ would correspond to the first and second objective of a biobjective problem, would imply a polynomial-size of the nondominated  set for the biobjective case.\qed
% \end{proof}

The above theorem shows that, for many important problems, one-exact $(1+\varepsilon)$-approximation sets are the best polynomial-cardinality partially exact approximation sets one can hope for when requiring to be exact in one fixed objective.
In fact, we are not aware of many (non-trivial) problems that admit polynomial-cardinality one-exact, quasi-$2$-exact $(1+\varepsilon)$-approximation sets. One notable exception is the biobjective minimum cut problem, for which it is known that the cardinality of the efficient set is polynomial in the instance size~\cite{Aissi+etal:MinCut}. Since the efficient set corresponds to a two-exact $(1+\varepsilon)$-approximation set in the biobjective case, this in particular shows the existence of a polynomial-cardinality one-exact, quasi-$2$-exact $(1+\varepsilon)$-approximation set. 

%\subsection{Quasi-exact approximations with a given number of ones}

\smallskip

We now consider quasi-$k$-exact $(1+\varepsilon)$-approximation sets, that is, approximation sets that approximate each feasible solution exactly in $k$~objectives (and with a factor of $(1+\varepsilon)$-approximate in all other objectives), but allow the objectives in which the approximation is exact to differ depending on the approximated solution. Here, a similar argument as in the proof of Theorem~\ref{thm:nonexistance1} shows that, for many multiobjective combinatorial optimization problems, polynomial-cardinality quasi-$k$-exact $(1+\varepsilon)$-approximation sets for $k >  \lceil \frac{p}{2}\rceil$ are not guaranteed to exist for all instances.

\begin{theorem}\label{thm:nonexistance2}
There exist instances of many $p$-objective combinatorial optimization problems, including the multiobjective versions of shortest path, minimum spanning tree, assignment, minimum $s$-$t$-cut, knapsack, and traveling salesman, that do not admit a polynomial-cardinality quasi-$k$-exact $(1+\varepsilon)$-approximation set for any $k>\lceil \frac{p}{2}\rceil$ and any $\varepsilon>0$.
\end{theorem}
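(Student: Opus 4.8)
The plan is to mirror the construction used in the proof of Theorem~\ref{thm:nonexistance1}, adapting it so that the reduction forces $k$ of the objectives to coincide rather than just two. I would again start from an \emph{intractable} biobjective instance of the given combinatorial problem (shortest path, minimum spanning tree, assignment, minimum $s$-$t$-cut, knapsack, or traveling salesman), i.e.\ one whose nondominated set has cardinality exponential in the instance size; such instances are known to exist for all of these problems. The goal is to blow this up into a $p$-objective instance in which any quasi-$k$-exact $(1+\varepsilon)$-approximation set must still contain a distinct solution for each of the exponentially many nondominated points, which immediately yields the nonexistence claim.

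The key step is the choice of how to distribute the two biobjective cost functions among the $p$ objectives. Since $k > \lceil \frac{p}{2}\rceil$, we have $p - k < \lfloor \frac{p}{2}\rfloor \le k$, so whichever way we split the $p$ objectives into two groups, at least one group has more than $p-k$ members. First I would let objective~$f_1$ carry the first biobjective cost function and let all remaining $p-1$ objectives carry the second biobjective cost function (exactly as in Theorem~\ref{thm:nonexistance1}). Now consider a solution~$x$ that quasi-$k$-exactly dominates another solution~$x'$ corresponding to a different nondominated point. By definition of $\edom{(k)}$, the approximation is exact (guarantee~$1$) in some set of $k$ objectives and only $(1+\varepsilon)$-approximate in the remaining $p-k$. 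The crucial combinatorial observation is that, because there are only two distinct objective functions underlying the $p$ objectives, and because $k$ exact objectives cannot all come from the single $f_1$-copy, at least one of the $p-1$ copies of the second cost function must be among the exact objectives; symmetrically, since $k > \lfloor p/2 \rfloor \geq p-k$, the $f_1$-objective is forced to be exact as well whenever $k$ is large enough. I would verify that for every $k > \lceil \frac{p}{2}\rceil$ this pigeonhole forces both underlying cost functions to be matched exactly, so that $f(x)$ and $f(x')$ agree in the original biobjective space, contradicting that they correspond to distinct nondominated points.

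The main obstacle I anticipate is making the pigeonhole argument airtight for all values of $k$ in the range $k > \lceil \frac{p}{2}\rceil$ simultaneously, since the number of $f_1$-copies (here one) and second-cost copies ($p-1$) is asymmetric, and one must confirm that no admissible placement of the $k$ exact objectives can avoid hitting \emph{both} cost functions exactly. If the naive $1$-versus-$(p-1)$ split does not force exactness on $f_1$ for the smaller admissible values of $k$, I would instead use a more balanced split, assigning $\lceil \frac{p}{2}\rceil$ copies to one cost function and $\lfloor \frac{p}{2}\rfloor$ to the other; then since $k > \lceil \frac{p}{2}\rceil$ exceeds the size of the larger group, at least one objective from \emph{each} group is necessarily exact, which is precisely what is needed. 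With the balanced split the argument is uniform in $k$.

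Once this is established, the remainder is routine: I would conclude that any quasi-$k$-exact $(1+\varepsilon)$-approximation set~$P$ must contain, for each nondominated point~$y$ of the biobjective instance, at least one solution whose image in the two underlying cost functions equals~$y$ exactly, so $|P|$ is at least the number of nondominated points, which is exponential. Hence no polynomial-cardinality quasi-$k$-exact $(1+\varepsilon)$-approximation set exists for any such instance, any $k > \lceil \frac{p}{2}\rceil$, and any $\varepsilon > 0$, as claimed.
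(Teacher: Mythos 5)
Your final construction---assigning $\lceil \frac{p}{2}\rceil$ of the $p$ objectives to the first biobjective cost function and $\lfloor \frac{p}{2}\rfloor$ to the second, so that any set of $k > \lceil \frac{p}{2}\rceil$ exact objectives must hit both groups and hence force equality with each nondominated biobjective image---is exactly the paper's proof, and your counting conclusion is correct. You were also right to distrust the naive $1$-versus-$(p-1)$ split carried over from Theorem~\ref{thm:nonexistance1}: for $\lceil \frac{p}{2}\rceil < k \leq p-1$ all $k$ exact objectives could lie among the $p-1$ copies of the second cost function, so that split genuinely fails and the balanced split is needed.
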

\begin{proof}
 As noted in the proof of Theorem~\ref{thm:nonexistance1}, the biobjective versions of all of these problems are known to be intractable. From such an intractable biobjective instance, construct a $p$-objective instance where the first $\lceil\frac{p}{2}\rceil$ objectives corresponds to the first objective of the biobjective instance and the other $\lfloor\frac{p}{2}\rfloor$ objectives correspond to the second objective function of the biobjective instance. Then, for $k>\lceil \frac{p}{2}\rceil$, at least one solution corresponding to each nondominated point of the biobjective instance is required in any quasi-$k$-exact $(1+\varepsilon)$-approximation set of the $p$-objective instance.\qedhere
 %Constructing a $p$-objective instance from an intractable biobjective instance as in the proof of Theorem~\ref{thm:nonexistance1} yields that at least one solution corresponding to each nondominated point of the biobjective instance is required in any quasi-$k$-exact $(1+\varepsilon)$-approximation set of the $p$-objective instance. 
 % Consider instances of these problems with $p$ objectives, where $\lceil \frac{p}{2}\rceil$ objectives are copies of the first objective and the other $\lfloor \frac{p}{2}\rfloor$ objectives are copies of the second objective. Then, any polynomial-size  $\preceq_{\varepsilon}^{(k)}$-approximation with $k\geq  \lceil \frac{p}{2}\rceil +1$ would imply a polynomial size of the nondominated  set for the biobjective case.%\qed
\end{proof}

Figure~\ref{fig:tractability-boundary} summarizes the known tractability and intractability results for different types of approximation sets.

\begin{figure}[ht!]
	\begin{center}
        \includegraphics[width = \textwidth]{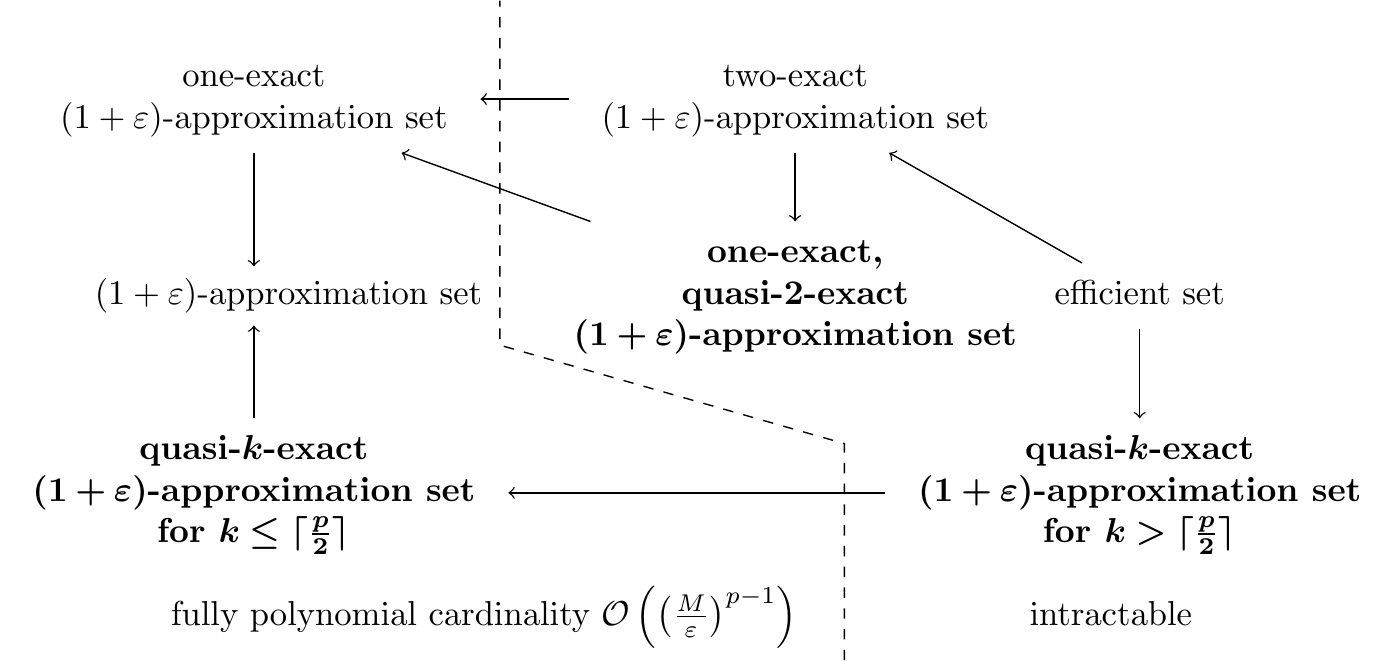}
		\caption[Tractability and intractability results for different types of approximation sets]{Tractability and intractability results for different types of approximation sets. An arrow indicates that one type of approximation set fulfills the requirements of the other. The dashed line marks the boundary between tractability and intractability in general $p$-objective optimization problems. New results obtained in this paper are shown in bold. The result on $(1+\varepsilon)$-approximation sets is shown in~\cite{Papadimitriou+Yannakakis:complexity}, and the results on one-exact and two-exact $(1+\varepsilon)$-approximation sets are shown in~\cite{HBRTV21}.}\label{fig:tractability-boundary}
	\end{center}
\end{figure}

\section{Minimum-Cardinality Approximation Sets}\label{sec:min-size}

Theorem~\ref{thm:specific-existence} implies that an asymptotic upper bound for the minimum cardinality of an ordinary, a one-exact, and a quasi-$k$-exact $(1+\varepsilon)$-approximation set with $k\leq \lceil \frac{p}{2}\rceil$ is in $\mathcal{O}\big(\!\left(\frac M \varepsilon\right)^{p-1}\big)$ and it is easy to see that this bound is tight, i.e., there exist instances where even a minimum-cardinality ordinary $(1+\varepsilon)$-approximation set consists of $\Theta\big(\!\left(\frac M \varepsilon\right)^{p-1}\big)$ solutions. For a specific instance, however, as pointed out in~\cite{Vassilvitskii+Yannakakis:trade-off-curves,Diakonikolas+Yannakakis:approx-pareto-sets}, there might still exist $(1+\varepsilon)$-approximation sets of vastly different cardinalities. This motivates to study properties of $(1+\varepsilon)$-approximation sets of minimum cardinality. Moreover, it raises the question whether the minimum possible cardinality can increase for a specific instance when considering partially exact $(1+\varepsilon)$-approximation sets. As shown in~\cite{HBRTV21}, this is indeed the case when considering one-exact $(1+\varepsilon)$-approximation sets: there exist instances for which the minimum cardinality of a one-exact $(1+\varepsilon)$-approximation set is an arbitrary factor larger than the minimum cardinality of an ordinary $(1+\varepsilon)$-approximation set. %As shown by the next results, such an increase of cardinality is not always observed.
%For a specific instance, however, as pointed out, e.g., in~\cite{Vassilvitskii+Yannakakis:trade-off-curves}, the cardinalities of smallest $\varepsilon$-Pareto sets of the different kinds might still differ significantly.

Hence, in this section, we systematically study minimum-cardinality ordinary and partially exact $(1+\varepsilon)$-approximation sets. In Subsection~\ref{subsec:properties}, we first investigate the (weak) efficiency of solutions in minimum-cardinality approximation sets and show that minimum-cardinality ordinary $(1+\varepsilon)$-approximation sets and quasi-$1$-exact $(1+\varepsilon)$-approximation sets can consist of dominated solutions only. For minimum-cardinality one-exact $(1+\varepsilon)$-approximation sets, however, we show that one contained solution must be weakly efficient, but not more than one in general.
Afterwards, in Subsection~\ref{subsec:min-size}, we relate the minimum cardinalities of different types of partially exact approximation sets to the minimum cardinality of an ordinary $(1+\varepsilon)$-approximation set. Here, we show that requiring the $(1+\varepsilon)$-approximation set to be quasi-$1$-exact never changes the minimum cardinality for any instance. In contrast, requiring to be one-exact or quasi-$k$-exact for any $k\geq2$ can increase the minimum cardinality by an arbitrarily large factor.

\subsection{Efficiency of Minimum-Cardinality Approximation Sets}\label{subsec:properties}

In this subsection, we investigate the question of (weak) efficiency of solutions in minimum-cardinality ordinary and partially exact $(1+\varepsilon)$-approximation sets.

\smallskip

Since any dominated solution in any type of partially exact or ordinary $(1+\varepsilon)$-approximation set can always be replaced by an efficient solution dominating it without impairing the obtained approximation guarantee, it follows that there always exist minimum-cardinality $(1+\varepsilon)$-approximation sets of each type that consist of efficient solutions only. In contrast, it is well known that, even in the biobjective case, there are ordinary $(1+\varepsilon)$-approximation sets of minimum cardinality that consist of strictly dominated solutions only. The following proposition generalizes this result to quasi-$1$-exact $(1+\varepsilon)$-approximation sets.

\begin{proposition}\label{prop:approx_dominated}
	For any $\varepsilon> 0$, there is an instance of a biobjective optimization problem that admits a quasi-$1$-exact $(1+\varepsilon)$-approximation set of minimum cardinality consisting of strictly dominated solutions only.
\end{proposition}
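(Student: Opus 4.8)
The plan is to exhibit one small biobjective instance with six feasible solutions whose images are four efficient points on a geometric ``staircase'' together with two strictly dominated points sitting just above two of them, and to show that these two dominated points form a quasi-$1$-exact $(1+\varepsilon)$-approximation set of the (provably) minimum cardinality~$2$.

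Concretely, I would fix a rational $\rho$ with $\sqrt{1+\varepsilon}<\rho<1+\varepsilon$ (which exists since $\varepsilon>0$) and take the four efficient images $y_i=(\rho^{\,i-1},\rho^{\,1-i})$ for $i=1,2,3,4$, i.e. $y_1=(1,1)$, $y_2=(\rho,\tfrac1\rho)$, $y_3=(\rho^2,\tfrac1{\rho^2})$, $y_4=(\rho^3,\tfrac1{\rho^3})$. The decisive design feature is the choice of $\rho$: one step changes each coordinate by the factor $\rho<1+\varepsilon$, but two steps change it by $\rho^2>1+\varepsilon$, so the relation $\edom{(1)}$ restricted to $\{y_1,\dots,y_4\}$ is exactly the path $y_1-y_2-y_3-y_4$, each $y_i$ quasi-$1$-exactly dominating only itself and its immediate neighbours (to the right via the first, to the left via the second objective), and with genuine slack because $\rho$ is strictly below $1+\varepsilon$. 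I would then add the two strictly dominated images $c_2=(\rho+\delta,\tfrac1\rho+\delta)$ and $c_3=(\rho^2+\delta,\tfrac1{\rho^2}+\delta)$, where $\delta>0$ is small enough (a finite list of explicit positive upper bounds, all positive precisely because $\rho<1+\varepsilon$); thus $c_2$ is strictly dominated by $y_2$ and $c_3$ by $y_3$.

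The first block of claims is that $P\colonequals\{c_2,c_3\}$ is a quasi-$1$-exact $(1+\varepsilon)$-approximation set. Using the slack, I would check directly that $c_2\edom{(1)}y_1$ (exact in the second objective) and $c_2\edom{(1)}y_3$ (exact in the first), and symmetrically $c_3\edom{(1)}y_2$ and $c_3\edom{(1)}y_4$; since $\edom{(1)}$ is reflexive, $c_2$ and $c_3$ dominate themselves, so every feasible solution is covered. The second block is the matching lower bound: no single solution quasi-$1$-exactly dominates all six points. For the two extremes this is the heart of the argument, $y_1$ has the minimum first and $y_4$ the minimum second coordinate, and the two-step gap $\rho^2>1+\varepsilon$ prevents any feasible point from being within the factor $1+\varepsilon$ of both at once; for the remaining candidates I would invoke that a strictly dominated point inherits the approximation power of its dominator (if $w\preceq x$ then $x\edom{(1)}z$ implies $w\edom{(1)}z$), so $c_2$ (respectively $c_3$) can dominate no more than $y_2$ (respectively $y_3$), neither of which reaches the opposite extreme. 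Hence the minimum cardinality is exactly~$2$, witnessed both by the efficient set $\{y_2,y_3\}$ and by the all-dominated set $P=\{c_2,c_3\}$, which is what the proposition asserts.

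The step I expect to be the main obstacle is engineering the dominated points so that they still cover the required neighbours: replacing a solution by a dominator can only enlarge its set of quasi-$1$-exactly dominated points, so $c_2,c_3$ are a priori weaker than $y_2,y_3$, and if the staircase were built with ratio exactly $1+\varepsilon$ (no slack) they would cover nothing beyond themselves. The crux is therefore the quantitative choice $\sqrt{1+\varepsilon}<\rho<1+\varepsilon$, which simultaneously leaves room for the perturbation $\delta$ so that $c_2,c_3$ keep covering the neighbourhoods of $y_2,y_3$, and keeps the two-step gap above $1+\varepsilon$ so that the coverage structure is the path $P_4$ — the smallest graph on which the domination number and the total-domination number coincide (both equal~$2$), which is exactly what forces the all-dominated optimum to match the overall optimum.
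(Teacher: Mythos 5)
Your proposal is correct and follows essentially the same approach as the paper's proof: a six-solution biobjective instance consisting of a four-point staircase (where the two extreme points are too far apart for any single solution to $(1+\varepsilon)$-cover both, forcing cardinality at least~$2$) plus two strictly dominated points perturbed slightly above the middle two, which cover the staircase in the same crossed pattern ($c_2$ covering $y_1,y_3$ and $c_3$ covering $y_2,y_4$, exactly as the paper's $x^5$ covers $x^1,x^3$ and $x^6$ covers $x^2,x^4$). The only difference is cosmetic: you parametrize the staircase geometrically via $\sqrt{1+\varepsilon}<\rho<1+\varepsilon$ and a small perturbation~$\delta$, whereas the paper hard-codes explicit values such as $1+\tfrac\varepsilon2$ and $(1+\varepsilon)(1+\tfrac\varepsilon4)$.
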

\begin{proof}
	The proof is illustrated in Figure~\ref{fig:approx_dominated}.
	Consider a biobjective optimization problem instance consisting of six feasible solutions $x^{1},x^{2},x^{3},x^{4},x^{5},x^{6}$ with
	\begin{align*}
		&f_1(x^{1}) = 1,  &&\quad f_2(x^{1}) = (1+\varepsilon)^2,\\
		&f_1(x^{2}) = 1+\tfrac\varepsilon 2, &&\quad f_2(x^{2}) = (1+\varepsilon) \cdot (1+\tfrac \varepsilon 4),\\
		&f_1(x^{3}) = (1+\varepsilon) \cdot (1+\tfrac \varepsilon 4), &&\quad  f_2(x^{3}) = 1+\tfrac \varepsilon 2,\\
		&f_1(x^{4}) = (1+\varepsilon)^2, &&\quad  f_2(x^{4}) = 1,\\
		&f_1(x^{5}) = 1+\varepsilon, &&\quad  f_2(x^{5}) = (1+\varepsilon) \cdot (1+ \tfrac \varepsilon 2),\\
		&f_1(x^{6}) = (1+\varepsilon) \cdot (1+ \tfrac \varepsilon 2), &&\quad  f_2(x^{6}) =  1+\varepsilon.
	\end{align*}
 
	First, observe that there is no solution that $\edom{}$-dominates the five other solutions, which means that any (quasi-$1$-exact) $(1+\varepsilon)$-approximation set consists of at least two solutions. However, even though $x^{5}$ and~$x^{6}$ are strictly dominated by $x^{2}$ and~$x^{3}$, respectively, the set $\{x^{5},x^{6}\}$ is a quasi-$1$-exact $(1+\varepsilon)$-approximation set:
	$x^{1}$ is $(1+\varepsilon,1)$-approximated by~$x^{5}$, $x^{2}$ is $(1+\varepsilon,1)$-approximated by~$x^{6}$, $x^{3}$ is $(1,1+\varepsilon)$-approximated by~$x^{5}$, and $x^{4}$ is $(1,1+\varepsilon)$-approximated by~$x^{6}$.
\end{proof}

\begin{figure}[ht!]
	\begin{center}
		\includegraphics[]{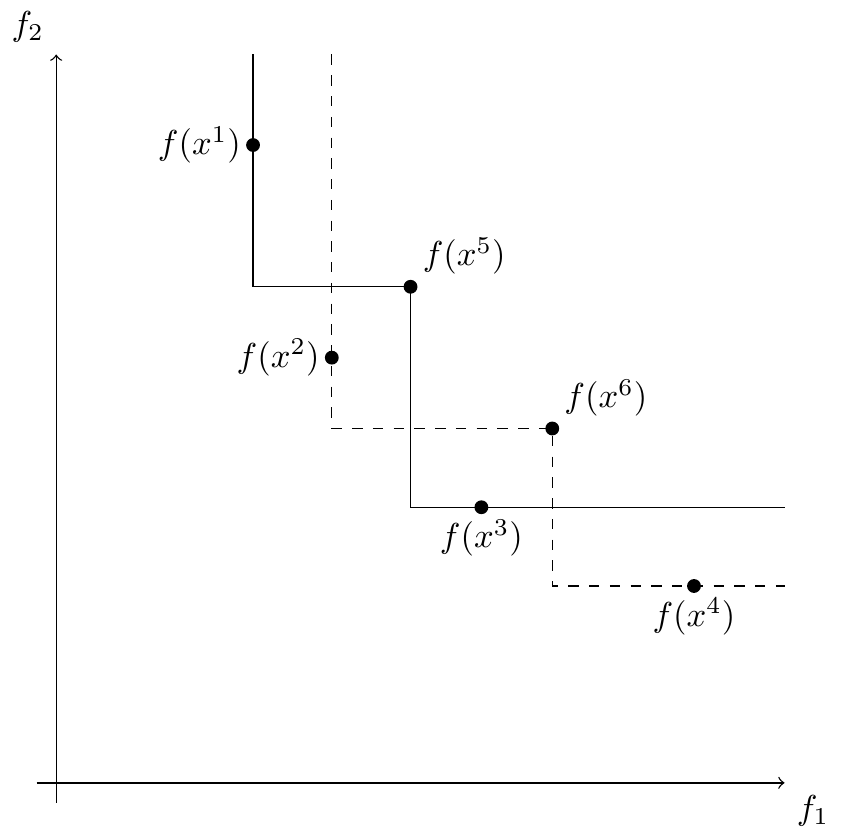}
		\caption[Illustration of the instance constructed in the proof of Proposition~\ref{prop:approx_dominated}]{Illustration of the instance constructed in the proof of Proposition~\ref{prop:approx_dominated}. The set~$\{x^{5},x^{6}\}$ is a minimum-cardinality quasi-$1$-exact $(1+\varepsilon)$-approximation set. The regions that are approximated by~$x^{5}$ and $x^{6}$ are indicated by a solid and a dashed line, respectively.}\label{fig:approx_dominated}
	\end{center}
\end{figure}

For one-exact $(1+\varepsilon)$-approximation sets, the definition implies that each such set must contain at least one weakly efficient solution, namely a solution with minimum $f_1$-value. The following result, however, shows that, even in the biobjective case, there are minimum-cardinality one-exact $(1+\varepsilon)$-approximation sets in which all other solutions are strictly dominated.

\begin{proposition}\label{prop:approx_dominated-one-exact}
    For any $\varepsilon>0$ and any positive integer $n\in\NN_+$, there exists an instance of a biobjective optimization problem that admits a one-exact $(1+\varepsilon)$-approximation set of minimum cardinality~$n+1$ in which all but one of the solutions are strictly dominated.
\end{proposition}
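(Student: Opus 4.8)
The plan is to construct an explicit finite biobjective instance with a carefully tuned \emph{two-scale} geometric structure along the Pareto front and then exhibit the desired approximation set directly, generalizing the gadget behind the biobjective statements of~\cite{HBRTV21}. Write $\beta:=1+\varepsilon$ and fix small parameters $\delta,\delta',\eta>0$ (in the end $\delta=\delta'=\tfrac{1}{3n}$ and $\eta$ tiny, with all values perturbed to positive rationals so that Assumption~\ref{assum:M} holds). I would introduce $3n+2$ feasible solutions: a global minimizer $u$ of $f_1$ (weakly efficient); $n+1$ \emph{witnesses} $W_0,\dots,W_n$ whose $f_2$-values decrease geometrically at ratio $\beta^{1+\delta'}$ (slightly faster than $\beta$) while the $f_1(W_k)$ increase in tiny steps of size $\eta$; $n$ \emph{approximators} $m_1,\dots,m_n$ (setting $m_0:=u$) whose $f_2$-values decrease at the slower ratio $\beta^{1-\delta}$, each $m_k$ placed just to the lower-left of $W_k$; and $n$ \emph{dominators} $d_1,\dots,d_n$, where $d_k$ strictly dominates $m_k$, sitting just below it in $f_2$ and just to its left in $f_1$. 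The approximation set I would exhibit is $P:=\{u,m_1,\dots,m_n\}$, of cardinality $n+1$, in which every member except $u$ is strictly dominated (by its $d_k$), while $u$ is weakly efficient as the unique minimizer of $f_1$.

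For the lower bound I would show that $W_0,\dots,W_n$ are pairwise incompatible with respect to $\edom{1}$, i.e.\ no feasible solution $\edom{1}$-dominates two of them. Since the $f_2$-values of the witnesses drop by a factor strictly larger than $1+\varepsilon$ between consecutive indices, for $i<j$ a solution one-exact-dominating both $W_i$ and $W_j$ would need $f_1\le f_1(W_i)$ and $f_2\le(1+\varepsilon)\,f_2(W_j)<f_2(W_i)$; I would verify the instance contains no solution in this region (this is where the tiny $f_1$-steps and the exact placement of the $m_k,d_k$ matter). As any one-exact $(1+\varepsilon)$-approximation set must contain a member $\edom{1}$-dominating each witness, pairwise incompatibility forces at least $n+1$ members, so the minimum cardinality is exactly $n+1$.

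For validity of $P$ I would check three things. First, $m_k\,\edom{1}\,W_k$: as $f_1(m_k)<f_1(W_k)$ the first objective matches exactly, and the slower decay of the $m_k$-scale gives $f_2(m_k)\le(1+\varepsilon)\,f_2(W_k)$ — the crucial point being that the per-step slack $\beta^{\delta+\delta'}$ between the two scales accumulates to at most a factor $1+\varepsilon$ over all $n$ steps, i.e.\ the inequality $n(\delta+\delta')\le 1$. Second, each dominator $d_k$ is $\edom{1}$-dominated by $m_{k-1}$: because consecutive approximators lie within a factor $1+\varepsilon$ of one another in $f_2$ (ratio $\beta^{1-\delta}<\beta$), the dominator $d_k$ just below $m_k$ is still within reach of $m_{k-1}$, and this chain terminates cleanly at $m_0=u$, so no member's dominator is left uncovered. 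Third, $u\,\edom{1}\,W_0$ and $u$ dominates itself. Together these show $P$ is a one-exact $(1+\varepsilon)$-approximation set, and since all $d_k$ are covered as well, $P$ covers the whole instance.

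The main obstacle — and the reason a naive staircase construction fails — is reconciling strict domination with the lower bound: each strictly dominated member $m_k$ drags in a dominator $d_k$ that the size-$(n+1)$ set must itself cover, yet the witnesses must be spread by more than a factor $1+\varepsilon$ in $f_2$ to stay incompatible. Covering each $d_k$ by a member \emph{below} it would force that member (and ultimately the lowest one) to reach ever further down, which simultaneously breaks witness incompatibility and never terminates. The resolution I would pursue is to orient the covering chain \emph{upward}, letting $d_k$ be dominated by the adjacent member $m_{k-1}$ above it, and to make the approximators slightly denser than the witnesses so that the dominators climb monotonically in $f_1$ and the chain terminates at $u$. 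Making the two geometric scales compatible across all $n$ levels (the bound $n(\delta+\delta')\le 1$) is the delicate quantitative heart of the argument, and I expect verifying these interlocking inequalities — together with ruling out all spurious coverings that would collapse the witness set — to be the most technical step.
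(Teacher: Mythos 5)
Your construction is essentially the paper's own proof: your anchor $u$, approximators $m_k$, dominators $d_k$, and witnesses $W_k$ correspond exactly to the paper's $x^0$, $x^i$, $\bar{x}^i$, and $\tilde{x}^i$, with the same two geometric scales (the paper takes $(1+\delta)^{2n}=1+\varepsilon$ and ratios $\frac{1+\varepsilon}{1+\delta}$ and $(1+\varepsilon)(1+\delta)$ in place of your $\beta^{1-\delta}$, $\beta^{1+\delta'}$ with $n(\delta+\delta')\le 1$), the same upward-oriented covering chain $m_{k-1}\edom{1} d_k$ (the paper's $x^i \edom{1} \bar{x}^{i+1}$), and the same witness-incompatibility lower bound, and your quantitative claims all check out. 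One wording slip that does not affect the argument: with the slower decay rate, $m_k$ sits to the left of and slightly \emph{above} $W_k$ (within the accumulated factor $1+\varepsilon$), not ``just to the lower-left'' of it --- a literal lower-left placement would contradict the stated decay ratio $\beta^{1-\delta}$ and break the chain covering of the $d_k$, but your own verification of $f_2(m_k)\le(1+\varepsilon)f_2(W_k)$ already uses the correct (upper-left) geometry.
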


\begin{proof}
    Given~$\varepsilon>0$ and $n\in\NN_+$, let $\delta>0$ so that $(1+\delta)^{2n}=1+\varepsilon$ and consider the following instance of a biobjective optimization problem: The feasible set is given as $X\colonequals\{x^0,\ldots,x^n,\bar{x}^1,\ldots,\bar{x}^n,\tilde{x}_0,\dots,\tilde{x}^n\}$ and
\begin{align*}
    f_1(x^0) & \colonequals 1, & f_2(x^0) & \colonequals (1+\varepsilon)^n, \\
    f_1(x^i) & \colonequals 3i+1, & f_2(x^i) & \colonequals (1+\varepsilon)^{n-i}\cdot(1+\delta)^i, \\
    f_1(\bar{x}^i) & \colonequals 3i, & f_2(\bar{x}^i) & \colonequals (1+\varepsilon)^{n-i}\cdot(1+\delta)^{i-1}, \\
    f_1(\tilde{x}^i) & \colonequals 3i+2, & f_2(\tilde{x}^i) & \colonequals (1+\varepsilon)^{n-i}\cdot\frac{1}{(1+\delta)^i},
    % f(\bar{x}^i) & \colonequals \left(3i,(1+\varepsilon)^{n-i}\cdot(1+\delta)^{i-1}\right), \\
    % f(\tilde{x}^i) & \colonequals \left(3i+2,(1+\varepsilon)^{n-i}\cdot\frac{1}{(1+\delta)^i}\right), \\
\end{align*}
which implies that
\begin{align}
    f_2(\bar{x}^i) & = \frac{1}{1+\delta}\cdot f_2(x^i) = \frac{1}{1+\varepsilon}\cdot f_2(x^{i-1}), \text{ and } \label{eq:barxi} \\
    f_2(\tilde{x}^i) & = \frac{1}{(1+\delta)^{2i}}\cdot f_2(x^i). \label{eq:tildexi}
\end{align}
We now show that $\{x^0,x^1,\ldots,x^n\}$ is a minimum-cardinality one-exact $(1+\varepsilon)$-approximation set, even though~$x^0$ is the only solution in this set that is not strictly dominated. To this end, first note that
\begin{align}
    f_1(x^0) & < f_1(\bar{x}^1) < f_1(x^1) < f_1(\tilde{x}^1) \nonumber \\
    & < f_1(\bar{x}^2) < f_1(x^2) < f_1(\tilde{x}^2) \nonumber \\
    & < \dots \nonumber \\
    & < f_1(\bar{x}^n) < f_1(x^n) < f_1(\tilde{x}^n). \label{eq:chain}
\end{align}
Thus, $x^0$ is not $\edom{1}$-dominated by any other solution, so~$x^0$ must be contained in every one-exact $(1+\varepsilon)$-approximation set. Moreover, the following statements hold: % for each~$i$:
\begin{itemize}
    \item $\bar{x}^i$ strictly dominates~$x^i$ for $i\geq1$,
    \item $x^i \edom{1} \tilde{x}^i$ for $i\geq 1$ (by~\eqref{eq:tildexi} and~\eqref{eq:chain}),
    \item $x^i \edom{1} \bar{x}^{i+1}$ (by~\eqref{eq:barxi} and~\eqref{eq:chain}), and, therefore, also $x^i \edom{1} x^{i+1}$ for $0\leq i \leq n-1$,
    \item Neither~$x^i$, nor~$\bar{x}^i$, nor~$\tilde{x}^i$ $\edom{1}$-dominates~$\tilde{x}^{i+1}$ for $0\leq i \leq n-1$ since
    \begin{align*}
        f_2(\tilde{x}^{i+1}) & = \frac{1}{(1+\varepsilon)\cdot(1+\delta)}\cdot f_2(\tilde{x}^i) \\
        & < \frac{1}{1+\varepsilon}\cdot f_2(\tilde{x}^i) < \frac{1}{1+\varepsilon}\cdot f_2(\bar{x}^i) < \frac{1}{1+\varepsilon}\cdot f_2(x^i).
    \end{align*}
    \item Neither~$x^i$, nor~$\bar{x}^i$, nor~$\tilde{x}^i$ $\edom{1}$-dominates~$\tilde{x}^j$ for $0\leq j<i\leq n$ (by~\eqref{eq:chain})
     \item Neither~$x^i$, nor~$\bar{x}^i$, nor~$\tilde{x}^i$ $\edom{1}$-dominates~$\tilde{x}^j$ for $i+2\leq j \leq n$ since
    \begin{align*}
        f_2(\tilde{x}^j) & < f_2(\tilde{x}^{i+1})
        < \frac{1}{1+\varepsilon}\cdot f_2(\tilde{x}^i) < \frac{1}{1+\varepsilon}\cdot f_2(\bar{x}^i) < \frac{1}{1+\varepsilon}\cdot f_2(x^i).
    \end{align*}
    In total, this implies that $\{x^0,x^1,\ldots,x^n\}$ is a one-exact $(1+\varepsilon)$-approximation set, even though~$x^0$ is the only solution in this set that is not strictly dominated.
    
    Moreover, for each~$i\in\{1,\dots,n\}$, in order to $\edom{1}$-dominate~$\tilde{x}^i$, at least one of the three solutions~$x^i$, $\bar{x}^i$, and $\tilde{x}^i$ must be contained in any one-exact $(1+\varepsilon)$-approximation set. Consequently, any one-exact $(1+\varepsilon)$-approximation set has cardinality at least~$n+1$.\qedhere
\end{itemize}
\end{proof}

The final result of this subsection, which will be used when analyzing the minimum cardinality of quasi-$1$-exact $(1+\varepsilon)$-approximation sets in the following subsection, states that any $(1+\varepsilon)$-approximation set consisting only of weakly efficient solutions must actually be quasi-$1$-exact.

\begin{proposition}\label{prop:weakly-efficient}
	Any $(1+\varepsilon)$-approximation set in which all solutions are weakly efficient is a quasi-$1$-exact $(1+\varepsilon)$-approximation set.
\end{proposition}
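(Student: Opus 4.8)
The plan is to show that the \emph{same} solution that witnesses ordinary $(1+\varepsilon)$-domination already witnesses quasi-$1$-exact domination, once weak efficiency is brought to bear. Concretely, I would fix an arbitrary $x'\in X$ and, using that $P$ is an (ordinary) $(1+\varepsilon)$-approximation set, pick some $x\in P$ with $x\edom{}x'$, i.e., $f_i(x)\leq(1+\varepsilon)\,f_i(x')$ for all $i\in\{1,\ldots,p\}$. The goal is then to prove that this particular $x$ in fact satisfies $x\edom{(1)}x'$, which suffices since $x'$ was arbitrary.

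The crucial observation is to unwind what $\edom{(1)}$ demands on top of $\edom{}$. Reading off Definition~\ref{def:relations}, $x\edom{(1)}x'$ means $x$ $\alpha$-approximates $x'$ for an $\alpha$ with one component equal to $1$ and the rest equal to $1+\varepsilon$; given that $x$ already $(1+\varepsilon)$-approximates $x'$ in \emph{every} objective, this reduces to the single extra requirement that there be \emph{one} objective $j$ in which the approximation is exact, i.e., $f_j(x)\leq f_j(x')$. So it is enough to exclude the possibility that $x$ is strictly worse than $x'$ in all objectives simultaneously.

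I would do this by contradiction. Suppose $x\edom{(1)}x'$ fails. Together with $x\edom{}x'$, this forces $f_j(x)>f_j(x')$ for every $j\in\{1,\ldots,p\}$, which is exactly the statement that $x'$ strictly dominates $x$ in the sense of Definition~\ref{def:dominance}. Moreover, these strict inequalities guarantee $f(x)\neq f(x')$ and hence $x\neq x'$, so $x'$ is a feasible solution \emph{different} from $x$ that strictly dominates it. This contradicts the hypothesis that every solution of $P$ — in particular $x$ — is weakly efficient. Therefore $x\edom{(1)}x'$, and letting $x'$ range over $X$ shows that $P$ is a quasi-$1$-exact $(1+\varepsilon)$-approximation set.

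I do not anticipate a genuine obstacle, as the argument is short; the only points requiring care are bookkeeping ones. One must correctly extract from Definition~\ref{def:relations} that quasi-$1$-exact dominance is ordinary $(1+\varepsilon)$-dominance augmented by a single exact coordinate, and one must verify $x\neq x'$ before invoking weak efficiency, since weak efficiency only forbids strict domination by \emph{another} feasible solution.
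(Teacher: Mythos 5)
Your proof is correct and takes essentially the same route as the paper's: fix $x'$, take $x\in P$ with $x\edom{}x'$, and note that if $x\edom{(1)}x'$ fails then $f_j(x)>f_j(x')$ for all $j$, so $x'$ strictly dominates $x$, contradicting the weak efficiency of $x$. Your additional check that $x\neq x'$ is a minor bookkeeping point the paper leaves implicit, since the strict inequalities already force the two solutions to differ.
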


\begin{proof}
	Let~$P$ be a $(1+\varepsilon)$-approximation set in which all solutions are weakly efficient. Let~$x' \in X$ be an arbitrary feasible solution and let~$x \in P$ be a solution that $\edom{}$-dominates~$x'$. If $x'$ is not~$\edom{(1)}$-dominated by~$x$, i.e., we do not have~$f_i(x) \leq f_i(x')$ for any~$i \in \{1,\ldots,p\}$, we immediately obtain that~$x$ is strictly dominated by~$x'$, which contradicts the weak efficiency of~$x$.
\end{proof}

\subsection{Relations Between Minimum Cardinalities}\label{subsec:min-size}

We now relate the minimum cardinalities of different types of partially exact approximation sets to the minimum cardinality of an ordinary $(1+\varepsilon)$-approximation set.

\smallskip

Concerning one-exact $(1+\varepsilon)$-approximation sets, it is shown in~\cite{HBRTV21} that, even for biobjective problems, the minimum cardinality of a such a set can be an arbitrary factor larger than the minimum cardinality of an ordinary $(1+\varepsilon)$-approximation set for certain instances. In contrast to this, we now use Proposition~\ref{prop:weakly-efficient} to show that requiring a quasi-$1$-exact $(1+\varepsilon)$-approximation set instead of only an ordinary $(1+\varepsilon)$-approximation set never changes the minimum cardinality for any instance.

\begin{theorem}\label{thm:min-size-quasi-1-exact}
    For any instance of a multiobjective optimization problem and any~$\varepsilon > 0$, there exists a minimum-cardinality ordinary $(1+\varepsilon)$-approximation set that is quasi-$1$-exact. In particular, the minimum cardinalities of an ordinary $(1+\varepsilon)$-approximation set and of a quasi-$1$-exact $(1+\varepsilon)$-approximation set coincide for every instance.
\end{theorem}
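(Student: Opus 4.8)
The plan is to reduce the statement to Proposition~\ref{prop:weakly-efficient}, after first arguing that a minimum-cardinality ordinary $(1+\varepsilon)$-approximation set may be taken to consist of efficient (hence weakly efficient) solutions only. To set this up, I would recall the replacement argument already noted in the discussion preceding Proposition~\ref{prop:approx_dominated}: if $y$ dominates $x$ (so $f_i(y)\leq f_i(x)$ for all $i$) and $x\edom{}x'$, then $f_i(y)\leq f_i(x)\leq(1+\varepsilon)\cdot f_i(x')$ for all $i$, whence $y\edom{}x'$ as well. Since Assumption~\ref{assum:M} guarantees that the image set $f(X)$ is finite, every solution is dominated by some efficient solution, and replacing dominated solutions one by one produces an ordinary $(1+\varepsilon)$-approximation set consisting only of efficient solutions whose cardinality does not exceed that of the original. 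Applying this to a minimum-cardinality ordinary $(1+\varepsilon)$-approximation set would therefore yield a minimum-cardinality ordinary $(1+\varepsilon)$-approximation set~$P$ in which every solution is efficient, and in particular weakly efficient.

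The crux is then a single appeal: because every solution in~$P$ is weakly efficient, Proposition~\ref{prop:weakly-efficient} shows directly that~$P$ is a quasi-$1$-exact $(1+\varepsilon)$-approximation set. This establishes the first assertion, namely that some minimum-cardinality ordinary $(1+\varepsilon)$-approximation set is quasi-$1$-exact.

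For the claim about equal minimum cardinalities, I would argue by two inequalities. On the one hand, $\edom{(1)}\,\subseteq\,\edom{}$ as subsets of~$X\times X$, since an exact guarantee of~$1$ in one coordinate is in particular a guarantee of~$1+\varepsilon$ there; hence every quasi-$1$-exact $(1+\varepsilon)$-approximation set is also an ordinary one, so the minimum cardinality of a quasi-$1$-exact set is at least that of an ordinary set. On the other hand, the set~$P$ constructed above is a quasi-$1$-exact $(1+\varepsilon)$-approximation set whose cardinality equals the minimum cardinality of an ordinary $(1+\varepsilon)$-approximation set, giving the reverse inequality, and combining the two yields equality. I do not anticipate a genuine obstacle: the only points requiring care are that the replacement step preserves $\edom{}$-domination and cannot increase cardinality, and both follow immediately from the monotonicity of~$\edom{}$ together with the finiteness of $f(X)$ furnished by Assumption~\ref{assum:M}.
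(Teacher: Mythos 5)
Your proof is correct and takes essentially the same route as the paper's: replace the dominated solutions in a minimum-cardinality ordinary $(1+\varepsilon)$-approximation set by (weakly) efficient solutions dominating them---which preserves both the approximation guarantee and the cardinality---and then invoke Proposition~\ref{prop:weakly-efficient}. Your extra details (the appeal to Assumption~\ref{assum:M} for finiteness of $f(X)$ and the explicit two-inequality argument for the equality of minimum cardinalities) merely spell out what the paper's proof leaves implicit.
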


\begin{proof}
	Let $P^*$ be a smallest ordinary $(1+\varepsilon)$-approximation set. Turn $P^{*}$ into a quasi-$1$-exact $(1+\varepsilon)$-approximation set~$P'$ of the same cardinality that consists of weakly efficient solutions only by replacing any strictly dominated solution~$x\in P^{*}$ by a weakly efficient solution that strictly dominates it (this does not impair the approximation guarantee). By Proposition~\ref{prop:weakly-efficient}, the set~$P'$ is then indeed a quasi-$1$-exact $(1+\varepsilon)$-approximation set.
\end{proof}

% \subsection{Minimum Cardinality of Quasi-$2$-Exact $\varepsilon$-Pareto sets}\label{subsec:min-size-quasi-2}

Contrasting the result on the minimum cardinality of quasi-$1$-exact $(1+\varepsilon)$-approximation sets shown in Theorem~\ref{thm:min-size-quasi-1-exact}, we now show that, for $k\geq 2$, the minimum cardinality of a quasi-$k$-exact $(1+\varepsilon)$-approximation set can be an arbitrary factor larger than the minimum cardinality of an ordinary $(1+\varepsilon)$-approximation set - even in the case of three objectives ($p=3$). The proof of the following theorem generalizes the construction used in the proof of Theorem~2 in~\cite{HBRTV21}.

\begin{theorem}\label{thm:min-size-quasi-2-exact}
For any $\varepsilon>0$ and any positive integer $n\in\NN_+$, there exist instances of $3$-objective optimization problems such that $\lvert P^{(2),*}\rvert>n\cdot\lvert P^*\rvert$, where~$P^{(2),*}$ denotes a minimum-cardinality quasi-$2$-exact $(1+\varepsilon)$-approximation set and~$P^*$ denotes a minimum-cardinality ordinary $(1+\varepsilon)$-approximation set.
\end{theorem}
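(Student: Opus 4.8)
The plan is to reduce the three-objective, quasi-$2$-exact setting to the biobjective, one-exact setting treated in~\cite{HBRTV21} by \emph{duplicating the exact objective}. Concretely, I would start from a biobjective instance $(X,(g_1,g_2))$ furnished by the proof of Theorem~2 in~\cite{HBRTV21}, for which the minimum cardinality of a one-exact $(1+\varepsilon)$-approximation set exceeds $n$ times the minimum cardinality of an ordinary $(1+\varepsilon)$-approximation set. From it I build a three-objective instance $(X,(f_1,f_2,f_3))$ on the same feasible set by setting $f_1\colonequals g_1$, $f_2\colonequals g_1$, and $f_3\colonequals g_2$; that is, the objective that is required to be exact in the biobjective construction is copied into the first two coordinates, while the remaining objective becomes the third coordinate. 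Assumption~\ref{assum:M} is inherited from the biobjective instance, so this is a legitimate instance of a three-objective optimization problem.

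The heart of the argument is to show that the two relevant approximate dominance relations on this instance collapse onto their biobjective counterparts. For the ordinary relation this is immediate: since coordinates~$1$ and~$2$ carry the identical value~$g_1$, the conditions $f_1(x)\leq(1+\varepsilon)f_1(x')$ and $f_2(x)\leq(1+\varepsilon)f_2(x')$ coincide, so $x\edom{}x'$ holds in the three-objective instance if and only if $g_1(x)\leq(1+\varepsilon)g_1(x')$ and $g_2(x)\leq(1+\varepsilon)g_2(x')$, i.e., if and only if $x$ ordinarily $(1+\varepsilon)$-dominates~$x'$ in the biobjective instance. For the quasi-$2$-exact relation I would check the three possible choices of the two exact coordinates: choosing $\{1,2\}$ exact yields the conditions $g_1(x)\leq g_1(x')$ and $g_2(x)\leq(1+\varepsilon)g_2(x')$, which is precisely one-exact $(1+\varepsilon)$-dominance in the biobjective instance, whereas choosing $\{1,3\}$ or $\{2,3\}$ exact additionally forces $g_2(x)\leq g_2(x')$ and is therefore strictly more restrictive. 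Since $x\edom{(2)}x'$ requires only that \emph{some} admissible choice of exact coordinates succeeds, the relation $\edom{(2)}$ on the three-objective instance coincides exactly with $\edom{1}$ on the biobjective instance.

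With both correspondences in hand, the ordinary $(1+\varepsilon)$-approximation sets of the three-objective instance and of the biobjective instance coincide as families of subsets of~$X$, and likewise the quasi-$2$-exact $(1+\varepsilon)$-approximation sets of the three-objective instance are exactly the one-exact $(1+\varepsilon)$-approximation sets of the biobjective instance. Hence $\lvert P^*\rvert$ equals the minimum cardinality of an ordinary biobjective approximation set and $\lvert P^{(2),*}\rvert$ equals the minimum cardinality of a one-exact biobjective approximation set, so the inequality $\lvert P^{(2),*}\rvert>n\cdot\lvert P^*\rvert$ is inherited directly from the biobjective gap of~\cite{HBRTV21}.

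I expect the only delicate point to be the verification that $\edom{(2)}$ collapses to $\edom{1}$ rather than to the stronger full-domination relation: one must argue that the freedom to let the approximating pair of exact coordinates depend on the approximated solution does not help, because the coincidence $f_1=f_2$ makes $g_1$-exactness unavoidable and the least restrictive admissible pair is always $\{1,2\}$. Once this is pinned down, everything else is a bookkeeping transfer of cardinalities, and no further calculation with the explicit point coordinates of~\cite{HBRTV21} is needed.
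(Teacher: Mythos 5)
Your proposal is correct. The relation-collapse argument is sound: with $f_1=f_2=g_1$ and $f_3=g_2$, the three admissible exact pairs for $\edom{(2)}$ reduce to the single weakest one $\{1,2\}$ (since exactness in coordinate~$3$ together with exactness in~$1$ or~$2$ strictly implies the $\{1,2\}$ condition), so $\edom{(2)}$ on the lifted instance coincides with biobjective $\edom{1}$, while $\edom{}$ trivially coincides with biobjective $\edom{}$; the cardinality gap then transfers verbatim. Interestingly, the paper's own proof is the \emph{instantiated} version of exactly this idea: it writes down the explicit three-objective instance $f(x^j)=\bigl(1+\tfrac{n-j}{n}\varepsilon,\,1+\tfrac{n-j}{n}\varepsilon,\,(1+\varepsilon)^{2j+1}\bigr)$ --- note the duplicated first two coordinates --- and verifies by direct computation that no solution $\edom{(2)}$-dominates any other (forcing $P^{(2),*}=X$ of cardinality $n+1$) while $x^0$ alone $\edom{}$-dominates everything (so $\lvert P^*\rvert=1$). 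The difference is thus one of proof architecture rather than of underlying construction: the paper is self-contained and needs only the \emph{construction idea} from the proof of Theorem~2 in~\cite{HBRTV21}, not its statement, whereas your reduction uses that theorem as a black box and in exchange proves something more general --- namely that \emph{any} biobjective instance exhibiting a one-exact versus ordinary cardinality gap lifts to a three-objective instance with the same quasi-$2$-exact versus ordinary gap. The one dependency you should make explicit is that the cited result of~\cite{HBRTV21} is available in the precise quantitative form you invoke (for every $\varepsilon>0$ and $n\in\NN_+$ an instance with ratio exceeding~$n$), which is indeed how the present paper paraphrases it; with that, your argument is complete.
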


\begin{proof}
Given~$\varepsilon>0$ and $n\in\NN_+$, we construct an instance of a $3$-objective optimization problem with $\lvert P^{(2),*}\rvert=n+1$ and $\lvert P^*\rvert=1$ as follows: The feasible set is given as $X\colonequals\{x^0,\dots,x^n\}$ and, for $j=0,\dots,n$, we define $f(x^j) = \left(f_1(x^j),f_2(x^j),f_3(x^j)\right)$ via
\begin{align*}
    f(x^j) \colonequals \left(1+\frac{n-j}{n}\cdot\varepsilon,\, 1+\frac{n-j}{n}\cdot\varepsilon,\, (1+\varepsilon)^{2j+1}\right).
\end{align*}
Note that this implies that
\begin{align*}
    f_i(x^0) > f_i(x^1) > \dots > f_i(x^n) \quad \text{ for } i=1,2,
\end{align*}
but
\begin{align*}
    f_3(x^j) = \frac{1}{(1+\varepsilon)^2}\cdot f_3(x^{j+1}) < \frac{1}{1+\varepsilon}\cdot f_3(x^{j+1})
\end{align*}
for $j=0,\dots,n-1$. Consequently, no solution $\edom{(2)}$-dominates any other solution, which implies that $P^{(2),*}=X$ with cardinality~$n+1$. The solution~$x^0$ with $f(x^0)=\left(1+\varepsilon,1+\varepsilon,1+\varepsilon\right)$, however, $\edom{}$-dominates all other solutions, so $\{x^0\}$ is a $(1+\varepsilon)$-approximation set.
\end{proof}

% \noindent
% Concerning the minimum cardinality of a one-exact $\varepsilon$-Pareto set, the following result has been shown in~\cite{HBRTV21}:

% \begin{theorem}[\cite{HBRTV21}]
% For any $\varepsilon>0$ and any positive integer $n\in\NN_+$, there exist instances of biobjective optimization problems such that $\lvert P^*\rvert>n\cdot\lvert P^*_\varepsilon\rvert$, where~$P^*$ denotes a smallest one-exact $\varepsilon$-Pareto set and~$P^*_\varepsilon$ denotes a smallest $\varepsilon$-Pareto set.
% \end{theorem}

% This shows that, even for biobjective problems, the minimum cardinality of a one-exact $\varepsilon$-Pareto set can be an arbitrary factor larger than the minimum cardinality of an ordinary $\varepsilon$-Pareto set.

\section{Computation of Approximation Sets}\label{sec:computation}

After studying the existence of (fully) polynomial-cardinality approximation sets and the impact of using partially exact approximate dominance relations on the minimum cardinality of approximation sets, we now consider the computability of such sets.

\smallskip

As shown in~\cite{Papadimitriou+Yannakakis:multicrit-approx}, a $(1+\varepsilon)$-approximation set can be computed in (fully) polynomial time for every $\varepsilon>0$ if and only if the auxiliary problem~$\gap_\delta(b)$ can be solved in (fully) polynomial time for every vector~$b$ and every~$\delta>0$. Hence, a natural question is whether (fully) polynomial-time solvability of~$\gap_\delta(b)$ is also sufficient for the (fully) polynomial-time computability of partially exact $(1+\varepsilon)$-approximation sets. The following theorem gives a negative answer to this question even for biobjective problems and the least-demanding type of partially exact $(1+\varepsilon)$-approximation sets. The proof is partly similar to the proof of Theorem~$1$ in~\cite{Vassilvitskii+Yannakakis:trade-off-curves}, where it is shown that polynomial-time algorithms only generating solutions by solving $\gap_\delta(b)$ cannot approximate the minimum cardinality of an ordinary $(1+\varepsilon)$-approximation set to a factor better than~$3$ in the biobjective case (such algorithms are called \emph{generic} in~\cite{Vassilvitskii+Yannakakis:trade-off-curves}).

% As shown in~\cite{Papadimitriou+Yannakakis:multicrit-approx}, an ordinary $(1+\varepsilon)$-approximation set can be computed in (fully) polynomial time for any instance of a multiobjective problem~$\Pi$ if and only if the gap problem $\gap_{\delta}$ can be solved in time polynomial in the instance
% As shown by~\cite{Papadimitriou+Yannakakis:multicrit-approx} an $(1+\varepsilon)$-approximation set is computable in (fully) polynomial time if and only if the problem  $\gap_{\delta}$ can be solved in (fully) polynomial time. It is then natural to check whether a similar result holds for other types of approximation sets. The following result provides a negative answer even  for quasi-$1$-exact $(1+\varepsilon)$-approximation sets in the biobjective case. As a direct consequence,  computability of problem  $\gap_{\delta}$ is \emph{not} enough to compute any approximation set other than $(1+\varepsilon)$-approximation set.

\begin{theorem}\label{thm:gap-not-enough}
    There exists no polynomial-time algorithm that returns a quasi-$1$-exact $(1+\varepsilon)$-approximation set for every biobjective problem instance and every $\varepsilon>0$, and generates feasible solutions only by solving $\gap_\delta(b)$ for points~$b$ of polynomial encoding size and polynomial values of~$\frac{1}{\delta}$. %(where \emph{polynomial} means polynomial in the instance size and~$\frac{1}{\varepsilon}$).
    % \begin{enumerate}
    %     \item[1)] returns a quasi-$1$-exact $(1+\varepsilon)$-approximation set for every biobjective problem instance and every $\varepsilon>0$, and
    %     \item[2)] generates feasible solutions only by solving $\gap_\delta(b)$ for points~$b$ of polynomial encoding size and polynomial values of~$\frac{1}{\delta}$.
    % \end{enumerate}
\end{theorem}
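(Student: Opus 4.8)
The plan is to argue by contradiction via an adversary that exploits the multiplicative slack inherent in the gap problem. Suppose such a polynomial-time, gap-only algorithm~$A$ existed, and fix $\varepsilon>0$. I would construct a biobjective instance with just two feasible solutions: an efficient solution~$a$ that every quasi-$1$-exact approximation set is forced to contain, together with a strictly dominated decoy~$d$ placed so close to~$a$ that the gap oracle can always answer with~$d$ (or ``no'') without ever revealing~$a$. Since the theorem restricts~$A$ to obtaining feasible solutions only through $\gap_\delta(b)$, hiding~$a$ from all of~$A$'s queries forces its output to omit~$a$ and hence to be invalid. Note that a valid quasi-$1$-exact set does exist here (namely~$\{a\}$, in line with Theorem~\ref{thm:min-size-quasi-1-exact}); it simply cannot be produced from gap answers.

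Concretely, I would set $a\colonequals(1,1)$ and $d\colonequals(1+\eta,1+\eta)$, where $\eta\colonequals 2^{-2M}$ is the smallest separation permitted by Assumption~\ref{assum:M}. The point of this choice is a quantifier trade-off: the encoding length of the instance is polynomial in~$M$, so every value of~$\delta$ that~$A$ may legitimately use satisfies $\delta\ge 1/\mathrm{poly}(M)\gg 2^{-2M}=\eta$, and therefore $d_i\le(1+\delta)\,a_i$ for all~$i$ and all admissible~$\delta$. The key observation is then: for the oracle to be forced to return a solution on a query $\gap_\delta(b)$, some solution must satisfy $f(x)\le b/(1+\delta)$; since~$a$ dominates~$d$, this happens only if $b_i\ge(1+\delta)\,a_i$ for every~$i$, which (as $(1+\delta)a_i>d_i$) already implies $d\le b$. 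Hence on every query the adversary may return~$d$ whenever forced to return anything and answer ``no'' otherwise; I would check that this is a legitimate $\gap_\delta$ oracle for the instance~$\{a,d\}$ on every query vector~$b$, so that~$a$ is never revealed.

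It remains to see that the output fails. As~$A$ only ever obtains~$d$, its output satisfies $P\subseteq\{d\}$. But~$d$ is strictly worse than~$a$ in both coordinates, so it is exact in neither; thus~$d$ does not $\edom{(1)}$-dominate~$a$, whereas the only other candidate,~$a$ itself, does. Since~$a$ is efficient, every quasi-$1$-exact $(1+\varepsilon)$-approximation set must contain~$a$, so $P\subseteq\{d\}$ is not one, contradicting the assumed correctness of~$A$. I would also remark that $\{d\}$ \emph{is} a valid ordinary $(1+\varepsilon)$-approximation set (as $d_i=1+\eta\le(1+\varepsilon)a_i$), which is precisely why the same oracle cannot expose a failure for ordinary sets and is consistent with gap solvability sufficing there~\cite{Papadimitriou+Yannakakis:multicrit-approx}.

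The main obstacle is the interplay between~$A$'s freedom to choose~$\delta$ and the precision of the hidden solution: one must guarantee that the decoy lies inside the gap shell $[a,(1+\delta)a]$ simultaneously for \emph{every} admissible~$\delta$, which is what forces the use of the exponentially fine separation~$2^{-2M}$ allowed by Assumption~\ref{assum:M} against the merely polynomial lower bound on~$\delta$. The remaining, routine-but-essential step is verifying that the ``always return~$d$, else answer no'' oracle meets the $\gap_\delta$ specification for all query shapes~$b$, including queries with one coordinate of~$b$ very large.
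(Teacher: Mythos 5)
Your proposal is correct and follows essentially the same route as the paper's own proof: a two-point construction whose efficient point $(1,1)$ is separated from a strictly dominated decoy by a gap exponentially smaller than any admissible~$\delta$, so that every call to $\gap_\delta(b)$ admits a valid answer (the decoy or NO) that never reveals the efficient point, which any quasi-$1$-exact $(1+\varepsilon)$-approximation set must contain. The only differences are cosmetic: the paper packages the same adversary as an indistinguishability argument between two instances (one with and one without the hidden point) and uses a separation $\frac{1}{l(\varepsilon)}$ with $l(\varepsilon)$ exponential in the input size and in~$\frac{1}{\varepsilon}$, whereas you use a single instance with separation~$2^{-2M}$ --- a choice that cannot literally coincide with the Assumption~\ref{assum:M} bound of the instance you build (encoding $1+2^{-2M}$ requires more than $M$~bits), but this is a harmless technicality since any separation exponentially small relative to the instance encoding and~$\frac{1}{\varepsilon}$ works.
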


\begin{proof}
    Given some (rational) $\varepsilon>0$, consider two instances~$I_1,I_2$ with feasible sets~$X^{I_1}=\{x^1\}$ and~$X^{I_2}=\{x^1,x^2\}$ and the objective function values given by
    \begin{align*}
        f(x^1) = \left(1+\frac{1}{l(\varepsilon)},\,1+\frac{1}{l(\varepsilon)}\right), \quad f(x^2) = \left(1,\,1\right),
    \end{align*}
    where~$l(\varepsilon)$ is a positive integer that is exponential in the input size and in~$\frac{1}{\varepsilon}$.
    We show that no algorithm that generates feasible solutions only by solving $\gap_\delta(b)$ for values $\delta\geq\frac{1}{l(\varepsilon)}$ can distinguish between the two instances (i.e., detect whether~$x^2$ is part of the feasible set). Since any smaller value of~$\delta$ would be exponential in the input size and in~$\frac{1}{\varepsilon}$ by definition of~$l(\varepsilon)$, and any quasi-$1$-exact $(1+\varepsilon)$-approximation set for instance~$I_2$ must include~$x^2$, this will show the claim.

    \smallskip

    So consider a call of $\gap_\delta(b)$ for some point~$b$ and some $\delta\geq\frac{1}{l(\varepsilon)}$. We distinguish two cases in order to show that $\gap_\delta(b)$ can always return either~$x^1$ or NO as a correct answer for both instance~$I_1$ and instance~$I_2$. If $b_i\geq f_i(x^1)$ for $i=1,2$, then $\gap_\delta(b)$ can return~$x^1$ for both instances. Otherwise, we have $b_j < f_j(x^1) = 1+\frac{1}{l(\varepsilon)}$ for some~$j\in\{1,2\}$. Using that $\delta\geq\frac{1}{l(\varepsilon)}$, this implies that
    \begin{align*}
        f_j(x^2) = 1 \geq \frac{1+\frac{1}{l(\varepsilon)}}{1+\delta} > \frac{b_j}{1+\delta}.
    \end{align*}
    Consequently, for both instances, there exists no solution with $j$-th objective value less than or equal to~$\frac{b_j}{1+\delta}$, so $\gap_\delta(b)$ can return NO. This shows that an algorithm that generates feasible solutions only by solving $\gap_\delta(b)$ for values of $\delta\geq\frac{1}{l(\varepsilon)}$ cannot detect whether~$x^2$ is part of the feasible set as claimed.
\end{proof}

Theorem~\ref{thm:gap-not-enough} shows that harder-to-solve auxiliary problems than the gap problem must be used for computing partially exact $(1+\varepsilon)$-approximation sets in polynomial time.
We now use Proposition~\ref{prop:weakly-efficient} to show that, for biobjective problems, several known algorithms for computing (small-cardinality) $(1+\varepsilon)$-approximation sets based on such auxiliary problems actually yield quasi-$1$-exact $(1+\varepsilon)$-approximation sets.

% Theorem~\ref{thm:computation--quasi-1-exact} shows that, for biobjective problems, small-cardinality quasi-$1$-exact $(1+\varepsilon)$-approximation sets can be computed in polynomial time using algorithms that solve $\constrained$ or $\dualrestrict_\delta$.  In \cite{HBRTV21},  the same results are established for one-exact $(1+\varepsilon)$-approximation, but using more involved algorithms (Corollary~$17$ and Theorem~$16$). 

% Proposition~\ref{prop:weakly-efficient} and Theorem~\ref{thm:min-size-quasi-1-exact} have immediate consequences concerning the polynomial-time computability of (small) quasi-$1$-exact $(1+\varepsilon)$-approximation sets, which are summarized in the following theorem:

\begin{theorem}\label{thm:computation--quasi-1-exact}
\begin{enumerate}
    \item[1)] For biobjective optimization problems for which $\constrained^1$ and $\constrained^2$ are solvable in (fully) polynomial time, a quasi-$1$-exact $(1+\varepsilon)$-approximation set with minimum cardinality can be computed in (fully) polynomial time for every instance.
    \item[2)] For biobjective optimization problems for which $\dualrestrict^1_\delta$ or $\dualrestrict^2_\delta$ is solvable in (fully) polynomial time, a quasi-$1$-exact $(1+\varepsilon)$-approximation set with cardinality at most twice the cardinality of a smallest quasi-$1$-exact $(1+\varepsilon)$-approximation set can be computed in (fully) polynomial time for every instance.
\end{enumerate}
\end{theorem}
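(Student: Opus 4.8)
The plan is to reduce both statements to the computation of an \emph{ordinary} $(1+\varepsilon)$-approximation set consisting only of weakly efficient solutions, after which Proposition~\ref{prop:weakly-efficient} and Theorem~\ref{thm:min-size-quasi-1-exact} do the rest. Concretely, I would invoke the known biobjective algorithms from the literature~\cite{Bazgan+etal:min-pareto,Diakonikolas+Yannakakis:approx-pareto-sets,Vassilvitskii+Yannakakis:trade-off-curves}: for part~1, the algorithm that computes a minimum-cardinality ordinary $(1+\varepsilon)$-approximation set using only calls to $\constrained^1$ and $\constrained^2$; for part~2, the algorithm that computes an ordinary $(1+\varepsilon)$-approximation set of cardinality at most twice the minimum using only calls to $\dualrestrict^1_\delta$ or $\dualrestrict^2_\delta$. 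If the solutions returned by these algorithms are weakly efficient, then Proposition~\ref{prop:weakly-efficient} turns the computed ordinary set into a quasi-$1$-exact set of the \emph{same} cardinality at no extra cost. The cardinality guarantees then transfer through Theorem~\ref{thm:min-size-quasi-1-exact}: since the minimum cardinalities of an ordinary and of a quasi-$1$-exact $(1+\varepsilon)$-approximation set coincide, a minimum-cardinality ordinary set is also minimum-cardinality among quasi-$1$-exact sets (part~1), and twice the minimum ordinary cardinality equals twice the minimum quasi-$1$-exact cardinality (part~2).

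For part~1, the weak efficiency of the computed solutions is immediate and requires no extra work. Every solution $x$ returned by $\constrained^i$ minimizes $f_i$ among all feasible solutions respecting an \emph{exact} budget on the remaining objective. Such an $x$ cannot be strictly dominated: a strict dominator would be strictly better in both objectives, hence would still satisfy the budget while attaining a smaller value of $f_i$, contradicting $f_i(x)=\opt_i$. Thus all solutions produced by the algorithm are weakly efficient, and the reduction above applies verbatim, yielding a minimum-cardinality quasi-$1$-exact $(1+\varepsilon)$-approximation set in (fully) polynomial time.

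For part~2, the crux (and what I expect to be the main obstacle) is that a solution returned by $\dualrestrict^i_\delta$ need \emph{not} be weakly efficient: the oracle may exploit the $(1+\delta)$-slack in the relaxed budget and return a strictly dominated solution, so Proposition~\ref{prop:weakly-efficient} cannot be applied to the raw output. My plan is to repair this by a post-processing step that replaces each returned solution $x$ by a weakly efficient solution weakly dominating it. Such a solution exists because $X$ is finite by Assumption~\ref{assum:M}; the replacement preserves the $(1+\varepsilon)$-approximation property, since a solution dominating $x$ $\alpha$-approximates everything that $x$ does for every $\alpha$ with $\alpha_j\geq 1$; and it does not increase the cardinality. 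The delicate point is to carry out this replacement in \emph{polynomial} time using only the approximate oracle $\dualrestrict^i_\delta$, which cannot pin down a solution respecting an exact budget and, naively iterated, could range over exponentially many distinct image points. I would address this by coupling the slack parameter $\delta$ to $\varepsilon$ (running the base algorithm for a slightly sharpened target so that the $(1+\delta)$-error introduced by the replacement is absorbed into the overall $(1+\varepsilon)$-guarantee) and by exploiting the value granularity from Assumption~\ref{assum:M}, which bounds how close a dominating image can be to $f(x)$. Once every solution has been made weakly efficient, Proposition~\ref{prop:weakly-efficient} yields a quasi-$1$-exact $(1+\varepsilon)$-approximation set whose cardinality is at most twice the minimum cardinality of an ordinary -- equivalently, by Theorem~\ref{thm:min-size-quasi-1-exact}, of a quasi-$1$-exact -- $(1+\varepsilon)$-approximation set, as claimed.
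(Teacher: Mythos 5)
Your overall plan---obtain an ordinary $(1+\varepsilon)$-approximation set consisting of weakly efficient solutions, then apply Proposition~\ref{prop:weakly-efficient} and Theorem~\ref{thm:min-size-quasi-1-exact}---is precisely the paper's strategy, and your part~1) is correct and essentially identical to the paper's argument: the paper also uses the greedy procedure of~\cite{Diakonikolas+Yannakakis:approx-pareto-sets} together with the weak efficiency of its output (which the paper cites from~\cite{Bazgan+etal:representation}, whereas you prove it directly from the exactness of the $\constrained^i$ oracle---a valid, self-contained substitute).

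Part~2), however, has a genuine gap, and it sits exactly where you suspect: the post-processing step that replaces each returned solution by a weakly efficient solution dominating it cannot be implemented in (fully) polynomial time in the stated oracle model, and neither of your proposed fixes closes the hole. First, the repair is circular: the only way to generate a candidate replacement is another call to $\dualrestrict^i_\delta$, whose output again need not be weakly efficient, so no bounded number of replacement rounds certifies weak efficiency. Second, the granularity of Assumption~\ref{assum:M} only helps if the multiplicative slack $1+\delta$ is pushed below the additive value spacing $2^{-2M}$, which (since objective values can be as large as $2^M$) forces roughly $\delta < 2^{-3M}$; then $\frac{1}{\delta}$ is exponential in the input size, so even a \emph{fully} polynomial-time solver for $\dualrestrict_\delta$ (running time polynomial in $\frac{1}{\delta}$) needs exponential time, and plain polynomial-time solvability for each fixed $\delta>0$ does not permit an instance-dependent $\delta$ at all. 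This is the same barrier that drives Theorem~\ref{thm:gap-not-enough}: approximate oracles with polynomially bounded $\frac{1}{\delta}$ cannot resolve exact dominance questions. Third, sharpening the target to $\varepsilon'<\varepsilon$ so that the slack is absorbed ruins the cardinality guarantee: the base algorithm then returns a set of cardinality at most twice the minimum cardinality of a $(1+\varepsilon')$-approximation set, and since every $(1+\varepsilon')$-approximation set is a $(1+\varepsilon)$-approximation set, this benchmark is never smaller---and can be arbitrarily larger---than the minimum cardinality of a $(1+\varepsilon)$-approximation set, so the claimed factor of~$2$ with respect to the smallest quasi-$1$-exact $(1+\varepsilon)$-approximation set does not follow. (A minor additional slip: Assumption~\ref{assum:M} makes the image set $f(X)$ finite, not $X$ itself, though this suffices for the existence of a weakly efficient dominator.)

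The paper closes this step by a different means: it invokes the fact, established in~\cite{Bazgan+etal:representation}, that the factor-$2$ algorithms of~\cite{Diakonikolas+Yannakakis:approx-pareto-sets,Bazgan+etal:representation} \emph{themselves} return only weakly efficient solutions, so Proposition~\ref{prop:weakly-efficient} applies directly to their output with no post-processing of oracle answers. A self-contained version of your argument would therefore have to analyze the specific greedy algorithm and show that the solutions it retains are weakly efficient, rather than attempt a generic repair of arbitrary $\dualrestrict^i_\delta$ outputs.
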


\begin{proof}
    \begin{enumerate}
    \item[1)] If $\constrained^1$ and $\constrained^2$ are solvable in (fully) polynomial time, an ordinary $(1+\varepsilon)$-approximation set with with minimum cardinality can be computed in (fully) polynomial time for every instance by a greedy procedure as shown in~\cite{Diakonikolas+Yannakakis:approx-pareto-sets}. Since this set contains only weakly efficient solutions~\cite{Bazgan+etal:representation}, it is actually a quasi-$1$-exact $(1+\varepsilon)$-approximation set of minimum cardinality by Proposition~\ref{prop:weakly-efficient}.% and Theorem~\ref{thm:min-size-quasi-1-exact}.
    \item[2)] If $\dualrestrict^1_\delta$ or $\dualrestrict^2_\delta$ is solvable in (fully) polynomial time, an ordinary $(1+\varepsilon)$-approximation set with cardinality at most twice the cardinality of a smallest ordinary $(1+\varepsilon)$-approximation set (which equals the minimum cardinality of a quasi-$1$-exact $(1+\varepsilon)$-approximation set by Theorem~\ref{thm:min-size-quasi-1-exact})
    can be computed in (fully) polynomial time for every instance as shown in~\cite{Diakonikolas+Yannakakis:approx-pareto-sets,Bazgan+etal:representation}. Moreover, the $(1+\varepsilon)$-approximation sets returned by the algorithms in~\cite{Diakonikolas+Yannakakis:approx-pareto-sets,Bazgan+etal:representation} contain only weakly efficient solutions~\cite{Bazgan+etal:representation}. Consequently, by Proposition~\ref{prop:weakly-efficient}, these sets are actually quasi-$1$-exact $(1+\varepsilon)$-approximation sets.\qedhere
\end{enumerate}
\end{proof}

We remark that, as shown in~\cite{HBRTV21}, the same results can be achieved even for one-exact $(1+\varepsilon)$-approximation sets, but modified algorithms targeted specifically at the computation of one-exact $(1+\varepsilon)$-approximation sets are necessary in this case. Moreover, it has been shown in~\cite{HBRTV21} that, for general $p$-objective problems, one-exact $(1+\varepsilon)$-approximation sets can be computed in (fully) polynomial time if and only if $\dualrestrict^1_{\delta}(b_2,\ldots, b_p)$ can be solved for any choice of the bounds $b_2,\ldots,b_p$ and any $\delta>0$ in (fully) polynomial time.

\smallskip

We finish this section by considering the case where all feasible solutions are given explicitly in the input. For this case, we now present two related approaches for the polynomial-time computation of $R$-approximation sets for approximate dominance relations~$R$ as in Theorem~\ref{thm:generalexistence} under the assumption that checking whether a given solution~$x\in X$ $R'$-dominates another given solution~$x'\in X$ (for~$R'$ as in the theorem) is possible in polynomial time. This class of relations, in particular, includes quasi-$k$-exact $(1+\varepsilon)$-dominance for any $k \leq \lceil \frac{p}{2}\rceil$ as well as one-exact $(1+\varepsilon)$-dominance and ordinary $(1+\varepsilon)$-dominance.

The first approach is based directly on the proof of Theorem~\ref{thm:generalexistence}. Given that checking whether a given solution~$x\in X$ $R'$-dominates another given solution~$x'\in X$ (for~$R'$ as in the theorem) is possible in polynomial time, we can generate the directed graph corresponding to the approximate dominance relation~$R'$ in each weakly nondominated nonempty hyperrectangle considered in the proof in polynomial time and compute a dominating set in each of these graphs. While computing a minimum-cardinality dominating set is $\textsf{NP}$-hard in general, a simple greedy algorithm can be used to compute a dominating set that is larger by at most a factor~$1+\log\lvert V \rvert$, where~$V$ denotes the vertex set~\cite{Johnson74a}. This yields a polynomial-cardinality set in our situation since~$\lvert V \rvert$ is bounded by~$\lvert X \rvert$ in all cases, which is polynomial in the input size since~$X$ is given explicitly in the input.

% When all feasible solutions are given explicitly, the proof of Theorem~\ref{thm:generalexistence} actually yields a polynomial-time algorithm for the computation of an $R$-approximation set under the assumption that checking whether a given solution~$x\in X$ $R'$-dominates another given solution~$x'\in X$ (for~$R'$ as in the theorem) is possible in polynomial time. In this case, we can generate the directed graph corresponding to the approximate dominance relation~$R'$ in each weakly nondominated nonempty hyperrectangle considered in the proof in polynomial time and compute a dominating set in each of these graphs. While computing a minimum-cardinality dominating set is $\textsf{NP}$-hard in general, a simple greedy algorithm can be used to compute a dominating set that is larger by at most a factor~$1+\log\lvert V \rvert$, where~$V$ denotes the ground set~\cite{Johnson74a}. This yields a polynomial-cardinality set in our situation since~$\lvert V \rvert$ is bounded by~$\lvert X \rvert$ in all cases, which is polynomial in the input size since~$X$ is given explicitly in the input.

A second related approach, which additionally yields a bound on the cardinality of the obtained $R$-approximation set, consists of constructing the directed graph on~$X$ corresponding to the approximate dominance relation~$R$ (which is possible in polynomial time given that $R'$-dominance of solutions can be checked in polynomial time) and to compute a dominating set of cardinality at most~$1+\log\lvert X \rvert$ times the minimum cardinality of dominating set in this graph using the algorithm from~\cite{Johnson74a}. The resulting set then corresponds to an $R$-approximation set of cardinality at most~$1+\log\lvert X \rvert$ times the minimum cardinality of an $R$-approximation set.

\section{Conclusion}

In this paper, we explore the borderline between tractability and intractability when approximating multiobjective optimization problems using more demanding approximate dominance relations than $(1+\varepsilon)$-dominance. We show that, under very weak assumptions, there always exist $(1+\varepsilon)$-approximation sets of fully polynomial cardinality such that every feasible solution is, in fact, $1$-approximated with respect to at least half of the objectives functions if we allow these objective functions to differ between different feasible solutions. If a polynomial-cardinality approximation set is required to be exact in one specified objective function, however, exactness cannot be ensured in a second objective function in general, even if this second objective function is allowed to differ depending on the approximated solution. This leads to the ``frontier of intractability'' being located in between one-exact $(1+\varepsilon)$-approximation sets and one-exact, quasi-$2$-exact $(1+\varepsilon)$-approximation sets and in between quasi-$\lceil \frac{p}{2}\rceil$-exact $(1+\varepsilon)$-approximation sets and quasi-$(\lceil \frac{p}{2} \rceil+1)$-exact $(1+\varepsilon)$-approximation sets.

While first positive and negative results concerning the polynomial-time computability of quasi-$k$-exact $(1+\varepsilon)$-approximation sets have been obtained in Section~\ref{sec:computation}, an important question for future research is to obtain further computability results that extend beyond the biobjective case or the situation where the feasible set is given explicitly in the input. Specifically, since both the polynomial-time computability of ordinary $(1+\varepsilon)$-approximation sets and of one-exact $(1+\varepsilon)$-approximation sets have been characterized by the polynomial-time solvability of certain auxiliary problems ($\gap$ and $\dualrestrict$, respectively), it would be interesting to investigate whether a similar characterization via an auxiliary problem can also be obtained for quasi-$k$-exact $(1+\varepsilon)$-approximation sets for $k \leq \lceil \frac{p}{2}\rceil$. As our results shown, such an auxiliary problem would have to be strictly harder to solve than $\gap$.% (and easier to solve than $\dualrestrict$ in the biobjective case).

\section*{Acknowledgements}

% \subsection*{Funding}
This work was supported by the bilateral cooperation project ``Approximation methods for multiobjective optimization problems'' funded by the German Academic Exchange Service (DAAD, Project-ID 57388848) and by Campus France, PHC PROCOPE 2018 (Project no. 40407WF) as well as by the Deutsche Forschungsgemeinschaft (DFG, German Research Foundation) – Project number~398572517.

% \subsection*{Competing Interests}
% The authors have no relevant financial or non-financial interests to disclose.

% \subsection*{Author Contributions}
% All authors contributed to the study conception and design and the writing of the manuscript. All authors read and approved the final manuscript.

% \subsection*{Data Availability}
% Not applicable

% \bibliographystyle{spbasic}
\bibliography{literature}
%\newpage
 
\end{document}